\newtheorem{theorem}{Theorem}[section]
\newtheorem{definition}[theorem]{Definition}
\newtheorem{corollary}[theorem]{Corollary}
\newtheorem{example}[theorem]{Example}
\newtheorem{proposition}[theorem]{Proposition}
\newtheorem{remark}[theorem]{Remark}
\newtheorem{question}[theorem]{Question}
\newcommand{\mN}{\mathbb N}
\newcommand{\mC}{\mathbb C}
\newcommand{\mR}{\mathbb R}
\newcommand{\be}{\begin{eqnarray}}
\newcommand{\ee}{\end{eqnarray}}
\newcommand{\bd}{\begin{definition}}
\newcommand{\ed}{\end{definition}}
\newcommand{\br}{\begin{remark}}
\newcommand{\er}{\end{remark}}
\newcommand{\gog}{{\mathfrak g}}
\newcommand{\gos}{{\mathfrak s}}
\newcommand{\gop}{{\mathfrak p}}
\newcommand{\gon}{{\mathfrak n}}
\newcommand{\gol}{{\mathfrak l}}
\newcommand{\gor}{{\mathfrak r}}
\newcommand{\bt}{\begin{tabular}}
\newcommand{\et}{\end{tabular}}
\def\ker{\operatorname{Ker}}
\def\im{\operatorname{Im}}
\def\Coker{\operatorname{Coker}}
\def\Ker{\operatorname{Ker}}
\def\Im{\operatorname{Im}}
\def\top{\operatorname{top}}
\def\Spin{\operatorname{Spin}}
\def\Cas{\operatorname{Cas}}
\newcommand{\Kt}{\tilde{K}}
\newcommand{\pf}{\begin{proof}}
\newcommand{\epf}{\end{proof}}
\newcommand{\eq}{\begin{equation}}
\newcommand{\eeq}{\end{equation}}
\newcommand{\fra}{\mathfrak{a}}
\newcommand{\frb}{\mathfrak{b}}
\newcommand{\frg}{\mathfrak{g}}
\newcommand{\frh}{\mathfrak{h}}
\newcommand{\frk}{\mathfrak{k}}
\newcommand{\frl}{\mathfrak{l}}
\newcommand{\frn}{\mathfrak{n}}
\newcommand{\fro}{\mathfrak{o}}
\newcommand{\frp}{\mathfrak{p}}
\newcommand{\frr}{\mathfrak{r}}
\newcommand{\frs}{\mathfrak{s}}
\newcommand{\frt}{\mathfrak{t}}
\newcommand{\bbC}{\mathbb{C}}
\newcommand{\bbN}{\mathbb{N}}
\newcommand{\bbR}{\mathbb{R}}
\newcommand{\bbZ}{\mathbb{Z}}
\begin{document}

\title[Higher Dirac cohomology]{Higher Dirac cohomology of modules with generalized infinitesimal character}
\author{Pavle Pand\v zi\'c}
\address[P. Pand\v zi\'c]{Department of Mathematics, University of Zagreb, Bijeni\v cka 30, 10000 Zagreb, Croatia}
	\email{pandzic@math.hr}
	\thanks{The author was partially supported by a grant from Ministry of science, education and sport of Republic of Croatia.}
\author{Petr Somberg}
\address[P. Somberg]{Mathematical Institute of Charles University, Sokolovsk\'a 83, 18000 Praha 8 - Karl\'{\i}n, Czech Republic}
\email{somberg@karlin.mff.cuni.cz}
\thanks{The author was supported by grant GA \v CR P201/12/G028.}

\begin{abstract} 
We modify the definition of Dirac cohomology in such a way that the standard properties of the usual Dirac cohomology, valid for
modules with infinitesimal character, become valid also for modules with only generalized infinitesimal character. 
\end{abstract}

\subjclass[2010]{22E47}
\keywords{Lie algebra, Harish-Chandra module, Highest weight module, Dirac operator, Dirac cohomology}

\maketitle

%%%%%%%%%%%%%%%%%%%%%%%%%%%%%%%%%%%%%%%%%%%%%%%%%%%%%%%%%%%%%%%%%%%%%%%%%%%%%%
%%%%%%%%%%%%%%%%%%%%%%%%%%%%%%%%%%%%%%%%%%%%%%%%%%%%%%%%%%%%%%%%%%%%%%%%%%%%%%

\section{Introduction}

Dirac operators were introduced into representation theory of real reductive Lie groups by Parthasarathy \cite{P1} 
with the aim to construct discrete series representations. The definition goes as follows: let $G$ be a connected
real reductive Lie group with maximal compact subgroup $K$ and let $\frg_0=\frk_0\oplus\frp_0$ be the corresponding
Cartan decomposition of the Lie algebra of $G$. Write $\frg=\frk\oplus\frp$ for the complexifications. 
Let $B$ be a nondegenerate invariant symmetric bilinear
form on $\frg$ such that $B$ is negative definite on $\frk_0$ and positive definite on $\frp_0$, and such that $\frk$ 
and $\frp$ are orthogonal to each other with respect to $B$. If $b_i$ is any basis of $\frp$, and if $d_i$ is the dual
basis with respect to $B$, then the Dirac operator associated to the pair $(\frg,\frk)$ is
\eq
\label{def Dirac op}
D=\sum_i b_i\otimes d_i\quad \in U(\frg)\otimes C(\frp),
\eeq
where $U(\frg)$ denotes the universal enveloping algebra of $\frg$ and $C(\frp)$ denotes the Clifford algebra of $\frp$
with respect to $B$. It is easy to see that $D$ is independent of the choice of $b_i$, and $K$-invariant for the adjoint
action of $K$ on $U(\frg)\otimes C(\frp)$. Moreover, Parthasarathy proved in \cite{P1} that
\eq
\label{D squared}
D^2=(\Cas_{\frk_\Delta} +\|\rho_\frk\|^2)-(\Cas_\gog\otimes 1 +\|\rho_\gog\|^2).
\eeq
Here $\Cas_\frg$ is the Casimir element of $U(\frg)$, $\Cas_{\frk_\Delta}$ is the Casimir element of a diagonal copy of
$U(\frk)$ in $U(\frg)\otimes C(\frp)$ obtained by combining the embedding $\frk\hookrightarrow\frg$ with the map 
$\frk\to\frs\fro(\frp)\cong\bigwedge^2(\frp)\hookrightarrow C(\frp)$, and $\rho_\frg$ respectively $\rho_\frk$ are the 
half sums of positive roots for $\frg$ respectively $\frk$.

Besides the construction of the discrete series representations, this Dirac operator was also useful in several classification
schemes for certain classes of unitary representations. Namely, in \cite{P2} Parthasarathy proved a necessary criterion for 
unitarity, the so called Dirac inequality. In short, if $V$ is a unitary $(\frg,K)$-module, then there is a natural inner product
on the $U(\frg)\otimes C(\frp)$-module $V\otimes S$, where $S$ is a spin module for $C(\frp)$. $D$ is self-adjoint with respect
to this inner product, hence $D^2\geq 0$. Writing this inequality explicitly on the $\tilde K$-types of $V\otimes S$ using
(\ref{D squared}) leads to the Dirac inequality.
(Here $\tilde K$ is the spin double cover of $K$, i.e., the pullback of the cover $\Spin(\frp_0)\to SO(\frp_0)$ by the action map
$K\to SO(\frp_0)$.)

In the 1990s, Vogan revisited Parthasarathy's theory with the aim of sharpening the Dirac inequality \cite{V}. He introduced the 
notion of Dirac cohomology: for a $(\frg,K)$-module $V$, consider the action of $D$ on $V\otimes S$. The Dirac cohomology of $V$ is
the $\tilde K$-module
\eq
\label{def Dir coh}
H_D(V)=\Ker D/(\Ker D\cap \Im D). 
\eeq
For admissible $V$, $H_D(V)$ is finite-dimensional, as follows readily from (\ref{D squared}). It turns out that for most $V$ the Dirac cohomology
is zero, but when it is not zero then it contains interesting information about $V$. In particular, the following result was conjectured by
Vogan and proved by Huang and Pand\v zi\'c in \cite{HP1}. Let $\frh=\frt\oplus\fra$ be a fundamental Cartan subalgebra of $\frg$ (so that $\frt$ is a Cartan subalgebra of $\frk$ and $\fra$ is the centralizer of $\frt$ in $\frp$). We view $\frt^*\subset\frh^*$, by
extending functionals on $\frt$ by zero over $\fra$. We consider infinitesimal characters as elements of $\frh^*$ via the Harish-Chandra isomorphism. 
We fix a choice of positive roots for $(\frk,\frt)$.

\begin{theorem}
\label{Vogan conjecture}
With the above notation, let $V$ be a $(\frg,K)$-module with infinitesimal character $\Lambda\in\frh^*$. Assume that $H_D(V)$ contains a $\tilde K$-type $E_\mu$ of highest weight $\mu\in\frt^*\subset\frh^*$. Then $\Lambda$ is conjugate to $\mu+\rho_\frk$ under the Weyl group $W(\frg,\frh)$.
\end{theorem}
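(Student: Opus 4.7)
The plan is to compare, on the Dirac cohomology class of a highest weight vector $v\otimes s\in V\otimes S$ of $\tilde K$-type $E_\mu$, the action of $Z(\frg)$ (which is scalar multiplication by $\chi_\Lambda$ since $V$ has infinitesimal character $\Lambda$) with the action of $Z(\frk_\Delta)$ (which is scalar multiplication by the $\frk$-character $\chi^\frk_{\mu+\rho_\frk}$ on any $\frk_\Delta$-highest-weight vector of weight $\mu$). The theorem will follow once one constructs an algebra homomorphism $\zeta\colon Z(\frg)\to Z(\frk_\Delta)$ realizing, via the two Harish-Chandra isomorphisms, the restriction map $P(\frh^*)^{W(\frg,\frh)}\to P(\frt^*)^{W(\frk,\frt)}$, and proves a super-commutator identity relating $z\otimes 1$ to $\zeta(z)$.

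More precisely, the key step is to produce, for every $z\in Z(\frg)$, an element $a\in U(\frg)\otimes C(\frp)$ with
\[
z\otimes 1 - \zeta(z) = Da + aD.
\]
The case $z=\Cas_\frg$ is essentially \eqref{D squared}, which rewrites as $\Cas_\frg\otimes 1 - \bigl(\Cas_{\frk_\Delta}+\|\rho_\frk\|^2-\|\rho_\frg\|^2\bigr)=-D^2$; here $\zeta(\Cas_\frg)$ is the element in parentheses and $a=-D$. To handle arbitrary $z$, I would study the super-derivation $d=[D,\cdot]$ of the associative superalgebra $U(\frg)\otimes C(\frp)$ and compute its cohomology by passing to the associated graded with respect to a suitable filtration; on the graded level $d$ becomes a Koszul-type differential on $S(\frg)\otimes\tbigwedge\frp$, whose cohomology is computable by standard homological arguments. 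A $K$-invariance computation should then identify the image of $Z(\frg)\otimes 1$ inside the $K$-invariant $d$-cohomology with a subring of $Z(\frk_\Delta)$, defining $\zeta$ as an algebra homomorphism. Comparison of Harish-Chandra images, pinned down on a single explicit test module with known Dirac cohomology (such as a finite-dimensional $\frg$-module, or a discrete series representation), then identifies $\zeta$ with the shifted restriction map.

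Granted $\zeta$ and the super-commutator identity, the theorem follows quickly. Let $v\otimes s\in\Ker D$ be a highest weight vector for $E_\mu$ with $[v\otimes s]\neq 0$ in $H_D(V)$, and fix $z\in Z(\frg)$. Then $(z\otimes 1)(v\otimes s)=\chi_\Lambda(z)\,v\otimes s$, while $\zeta(z)(v\otimes s)=\chi^\frk_{\mu+\rho_\frk}\bigl(\zeta(z)\bigr)\,v\otimes s$, and the identity gives
\[
\bigl(z\otimes 1 - \zeta(z)\bigr)(v\otimes s) = Da(v\otimes s)\in \Im D.
\]
Since $[v\otimes s]$ is a nonzero class in $\Ker D/(\Ker D\cap \Im D)$, the two scalars must coincide, i.e.\ $\chi_\Lambda(z)=\chi^\frk_{\mu+\rho_\frk}\bigl(\zeta(z)\bigr)$. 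As $z$ varies over $Z(\frg)$ and $\zeta$ corresponds to restriction under Harish-Chandra, this says precisely that $\chi_\Lambda=\chi_{\mu+\rho_\frk}$ on $Z(\frg)$, and the Harish-Chandra parametrization then forces $\Lambda$ and $\mu+\rho_\frk$ to be $W(\frg,\frh)$-conjugate.

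The hard part of the program is the construction of $\zeta$ and the identification of its Harish-Chandra image with the shifted restriction map. In particular, proving that the $K$-invariant cohomology of $[D,\cdot]$ on $U(\frg)\otimes C(\frp)$ is no larger than the image of $Z(\frk_\Delta)$, so that $\zeta$ is well-defined and lands inside $Z(\frk_\Delta)$, demands the most delicate filtered/graded analysis, and this is where essentially all the algebraic work concentrates.
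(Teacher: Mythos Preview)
Your proposal is correct and follows exactly the approach the paper recalls (from \cite{HP1}) in the proof of Theorem~\ref{hdc vogan}: one writes $z\otimes 1=\zeta(z)+Da+aD$ with $\zeta(z)\in Z(\frk_\Delta)$ and $a\in (U(\frg)\otimes C(\frp))^K$, observes that $Da+aD$ kills Dirac cohomology classes, and then identifies $\zeta$ via the two Harish-Chandra isomorphisms. Your sketch of how to construct $\zeta$ and prove the identity---computing the cohomology of the super-derivation $[D,\cdot]$ by passing to the associated graded, where it becomes a Koszul differential on $S(\frg)\otimes\tbigwedge\frp$---is precisely the argument of \cite{HP1}, so there is nothing to add.
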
 

It is an interesting problem to classify all irreducible unitary modules with nonvanishing Dirac cohomology, and to calculate the Dirac cohomology for each such module. Some results towards the solution of this problem can be found for example in \cite{BP1}, \cite{BP2}, \cite{hkp} and \cite{HPP}.

The above setting was generalized by Kostant to the situation where $\frk$ is replaced by a reductive quadratic subalgebra of $\frg$, i.e., a reductive subalgebra $\frr$ such that $B\big|_{\frr\times\frr}$ is nondegenerate. The appropriate replacement for the above $D$ is then Kostant's cubic Dirac operator \cite{K1}, which we again denote by $D$. If the orthogonal of $\frr$ in $\frg$ is denoted by $\frs$, so that $\frg=\frr\oplus\frs$, then
$D\in U(\frg)\otimes C(\frs)$ is defined as
\eq
\label{def cubic}
D=\sum_i b_i\otimes d_i + \frac{1}{2}\sum_{i<j<k} B([b_i,b_j],b_k)d_i\wedge d_j\wedge d_k,
\eeq
where, as before, $b_i$ and $d_i$ are dual bases of $\frs$ with respect to $B$, and the wedge product is defined
using Chevalley's identification of $C(\frs)$ and $\bigwedge(\frs)$ as vector spaces.

Kostant proved that (\ref{D squared}) and Theorem \ref{Vogan conjecture} still hold in this setting. (Dirac cohomology in this more general setting is still defined by (\ref{def Dir coh}).) Moreover, in \cite{kos} he obtained
some results about Dirac cohomology of highest weight modules with respect to a parabolic subalgebra $\frp=\frl\oplus\frn$ of $\frg$. (In this case, the above $\frr$ is equal to $\frl$.) In particular, in the equal-rank case he obtained the 
result on non-triviality of Dirac cohomology for all highest weight modules and also 
determined the Dirac cohomology of finite-dimensional modules. These investigations were continued by Huang and Xiao in \cite{hx}, where
it is proved that Dirac cohomology of simple highest weight modules is up 
to a twist equal to $\frn$-cohomology. Special cases of this were also obtained in \cite{HPR}. Another place where Dirac cohomology of
highest weight modules is studied is \cite{dh}.

We are interested in applying Dirac cohomology techniques to the study of restrictions of highest weight modules, as for example the
restrictions described in \cite{koss} and \cite{kossII}. In the course of our investigations, we noticed that while the above described
notion of Dirac cohomology works fine for modules with infinitesimal character, it does not have so good properties for modules with
only generalized infinitesimal character. Typical representative examples of this situation are given in Section \ref{examples}.

The aim of this paper is to overcome such difficulties by introducing new kinds of Dirac cohomology functors, which we call 
higher Dirac cohomology. As we shall see, the higher Dirac cohomology is the same as the usual Dirac cohomology for modules
with infinitesimal character, and it is typically bigger for modules with only generalized infinitesimal character.

Before describing the kind of properties we want our Dirac cohomology to have, let us describe our setting more precisely. We will work with
a reductive Lie algebra $\frg$ over $\bbC$, with a nondegenerate invariant symmetric bilinear form $B$, and a quadratic reductive subalgebra $\frr$.
We will consider $\frg$-modules $V$ of finite length, such that the restriction of $V$ to $\frr$ equals a direct sum of finite-dimensional irreducible
$\frr$-modules with finite multiplicities. Moreover, we assume that $V$ is a direct sum of submodules with generalized infinitesimal character. (Recall that a $\frg$-module $W$ has generalized infinitesimal character $\chi$ if there is $N\in\bbN$ such that $(z-\chi(z))^N=0$ on $W$ for each $z$ in the center $Z(\frg)$ of $U(\frg)$.)

Note that any $V$ as above is finitely generated over $U(\frg)$, and also $Z(\frg)$-finite. In case when the pair $(\frg,\frr)$ comes from a real reductive Lie group $G$ and its maximal compact subgroup $K$, then some of the above assumptions are redundant, i.e., follow from the other assumptions.

In the rest of the article all $(\frg,\frr)$-modules we consider will satisfy these assumptions, and we will call such modules simply 
$(\frg,\frr)$-modules.

Let us now assume that $\frg$ and $\frr$ have equal rank. Let $\frh$ be a common Cartan subalgebra of $\frg$ and $\frr$,
and fix compatible choices of positive roots $\Delta^+(\frg,\frh)\supset\Delta^+(\frr,\frh)$. The spin module $S$ can be
constructed as $\bigwedge(\frs^+)$, where $\frs^+$ is spanned by the positive root spaces in $\frg$ which are not in $\frr$.
We now set $S^+=\bigwedge^{even}(\frs^+)$ and $S^-=\bigwedge^{odd}(\frs^+)$. It follows that the spin module splits
as 
\eq 
\label{S plusminus}
S=S^+\oplus S^-,
\eeq 
and $S^\pm$ are non-isomorphic $\frr$-modules. Since $D$ has odd $C(\frs)$-part,
it interchanges $V\otimes S^+$ and $V\otimes S^-$ for any $(\frg,\frr)$-module $V$. 
This implies that $H_D(V)$ splits accordingly into
\[
H_D(V)=H_D(V)^+\oplus H_D(V)^-.
\]

Then we have the following result (\cite{hp}, 9.6.2):
\begin{proposition}
\label{six-term ic}
Assume that $\frg$ and $\frr$ have equal rank. Let $S^\pm$ and $H_D^\pm$ be defined as above. Let  
\[
0\to U\to V\to W\to 0
\]
be a short exact sequence of $(\frg,\frr)$-modules, each having an infinitesimal character. Then there is
a six-term exact sequence of the form
\[
\begin{CD}
H_D(U)^+  @>>>  H_D(V)^+  @>>> H_D(W)^+ \\
@AAA               @.           @VVV    \\
H_D(W)^-  @<<<  H_D(V)^-  @<<< H_D(U)^-
\end{CD}
\]
Moreover, the construction of this six-term sequence is natural, in the sense that the horizontal arrows are induced by the
given maps $ U\to V\to W$, and the vertical arrows, or connecting homomorphisms, are defined in a functorial way.
\end{proposition}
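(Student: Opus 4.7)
The plan is to realize $H_D$, in the presence of an infinitesimal character, as the cohomology of a genuine $\bbZ/2$-graded complex, and then apply the standard six-term exact sequence for such complexes.

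First I would exploit the Parthasarathy/Kostant formula (\ref{D squared}). Since $V$ has infinitesimal character $\chi$, $\Cas_\frg$ acts on $V$ by the scalar $\chi(\Cas_\frg)$; on the other hand $\Cas_{\frk_\Delta}$ acts by a scalar on each $\tilde K$-isotypic component of $V\otimes S$. It follows that $D^2$ acts as a scalar $c_E(\chi)$ on the $E$-isotypic component, depending only on the $\tilde K$-type $E$ and on $\chi$. Let $(V\otimes S)_0$ be the sum of those isotypic components with $c_E(\chi)=0$; on its complement $D^2$ is invertible, hence so is $D$, and that complement contributes nothing to $\ker D/(\ker D\cap \im D)$. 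Consequently $(V\otimes S)_0$, equipped with the restriction of $D$ and the $\bbZ/2$-grading inherited from $S=S^+\oplus S^-$, is a genuine $\bbZ/2$-graded complex whose $+$ and $-$ cohomologies are exactly $H_D(V)^+$ and $H_D(V)^-$.

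The three modules $U$, $V$, $W$ necessarily share a common infinitesimal character, since any $z\in Z(\frg)$ acting by a scalar on $V$ does so by the same scalar on the submodule $U$ and the quotient $W$. Hence the eigenvalue $c_E(\chi)$ of $D^2$ on a fixed $\tilde K$-isotypic component is the same across $U\otimes S$, $V\otimes S$ and $W\otimes S$. Since tensoring with the finite-dimensional space $S$ is exact and the given morphisms are $\tilde K$-equivariant, the input sequence induces a short exact sequence
\[
0\to (U\otimes S)_0\to (V\otimes S)_0\to (W\otimes S)_0\to 0
\]
of $\bbZ/2$-graded complexes with differential $D$. I would then invoke the standard six-term exact cohomology sequence associated to a short exact sequence of $\bbZ/2$-graded complexes, proved exactly as in the ungraded case via the snake lemma: the horizontal arrows are induced by $U\to V\to W$, and the connecting homomorphisms $H_D(W)^\pm\to H_D(U)^\mp$ are defined by lifting a cocycle from $(W\otimes S)_0^\pm$ to $(V\otimes S)_0^\pm$, applying $D$, and pulling the result back along $(U\otimes S)_0^\mp\hookrightarrow(V\otimes S)_0^\mp$.

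The only substantive point is the identification of $H_D(V)$ with the cohomology of the subcomplex $(V\otimes S)_0$, together with the checks that this subcomplex is preserved by $D$ and by morphisms of $(\frg,\frr)$-modules sharing a fixed infinitesimal character; both follow from $D$ being $\tilde K$-invariant and from $c_E(\chi)$ depending only on $E$ and $\chi$. Once this is in place the remaining argument is formal homological algebra, and the naturality of the snake-lemma construction automatically yields the functoriality statement. I expect the main obstacle in a careful write-up to be precisely this compatibility between the decomposition $(V\otimes S)=(V\otimes S)_0\oplus(V\otimes S)_0'$ and arbitrary morphisms in the category under consideration — a compatibility that is exactly what fails once one passes from infinitesimal characters to merely generalized infinitesimal characters, and motivates the modified definition introduced later in the paper.
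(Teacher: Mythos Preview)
The paper does not actually supply a proof of this proposition; it is simply quoted as a known result, with a citation to \cite{hp}, 9.6.2. So there is no in-paper argument to compare against.

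Your proposed argument is correct and is essentially the standard one (and almost certainly the one in the cited reference). The key step --- using (\ref{D squared}) together with the infinitesimal character to see that $D^2$ acts by the scalar $c_E(\chi)$ on each $\tilde K$-isotypic component, so that the generalized $0$-eigenspace $(V\otimes S)_0$ is in fact an honest $\bbZ/2$-graded complex with $D^2=0$ --- is exactly right, and reduces the statement to the snake lemma. Your remark that $(V\otimes S)_0$ and its complement are $D$-stable follows most cleanly from the fact that $D$ commutes with $D^2$, hence preserves the eigenspace decomposition of $D^2$; this is slightly more robust than appealing to $\tilde K$-invariance of $D$ alone, since $D$ can move vectors between different $\tilde K$-types. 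With that in hand, the short exact sequence of complexes and the functorial six-term sequence follow formally. Your closing observation --- that this reduction is precisely what breaks down when one only has a generalized infinitesimal character, because $D^2$ need no longer be semisimple --- is also the point the paper is driving at.
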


The above six-term sequence can be rewritten starting from the lower right corner, and then added to the above one.
The result is the six-term exact sequence 

\vspace{0.3cm}
\eq
\label{exact triangle}
\begin{CD}
H_D(U)  @>>>  H_D(V)  @>>> H_D(W) \\
@AAA               @.           @VVV    \\
H_D(W)  @<<<  H_D(V)  @<<< H_D(U)
\end{CD}
\eeq
\vspace{0.3cm}

which can in fact be collapsed onto an exact triangle  

\vspace{0.3cm}
\xymatrix{
& & & H_D(U) \ar[rr] & & H_D(V)\ar[ld] \\
& & & &  H_D(W)\ar[lu] & &  \\ 
}
\vspace{0.3cm}

This exact sequence 
can be obtained without the equal rank assumption. 
 
Moreover, we can define Dirac index of a $(\frg,\frr)$-module $V$ to be the virtual $\frr$-module
\eq
\label{def index}
I_D(V)=H_D(V)^+ - H_D(V)^-.
\eeq
Here and in the following, by a virtual $\frr$-module we mean an element of the Grothendieck group of the (semisimple) category of semisimple admissible $\frr$-modules.
Then we have the following result, see \cite{MPV}:
\begin{proposition}
\label{thm index ic}
Assume $\frg$ and $\frr$ have equal rank. Let $S^\pm$, $H_D^\pm$ and $I_D$ be defined as above.
Let $V$ be a $(\frg,\frr)$-module with infinitesimal character. Then
\[
I_D(V)=V\otimes S^+ - V\otimes S^-
\]
as virtual $\frr$-modules.
\end{proposition}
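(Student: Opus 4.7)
The plan is to compute both sides in the Grothendieck group of admissible $\frr$-modules by decomposing $V\otimes S$ into $\frr$-isotypic components. Since $V$ is $\frr$-admissible by the standing assumptions and $S$ is finite-dimensional, $V\otimes S$ is $\frr$-admissible, hence a direct sum of finite-dimensional $\frr$-isotypic components $W_i$, each of which splits as $W_i = W_i^+\oplus W_i^-$ with $W_i^\pm = W_i\cap (V\otimes S^\pm)$. Because $D$ is $\frr$-equivariant and has odd $C(\frs)$-part, it preserves every $W_i$ and interchanges $W_i^+$ with $W_i^-$. The proposition will follow once I establish the local identity $H_D(W_i)^+ - H_D(W_i)^- = W_i^+ - W_i^-$ in the Grothendieck group for each $i$ and sum over $i$.

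The main tool is the Kostant--Parthasarathy formula (\ref{D squared}), which gives $D^2 = \Cas_{\frr_\Delta} + \|\rho_\frr\|^2 - \Cas_\frg\otimes 1 - \|\rho_\frg\|^2$. Under the infinitesimal character hypothesis, $\Cas_\frg\otimes 1$ acts by a scalar on all of $V\otimes S$, while $\Cas_{\frr_\Delta}$ acts by a scalar on any individual $\frr$-isotypic component $W_i$. Consequently $D^2 = c_i\cdot\Id$ on $W_i$ for some scalar $c_i$. I then split into two cases. If $c_i\neq 0$, the operator $D|_{W_i}$ is invertible, so $H_D(W_i) = 0$; moreover $D$ restricts to an $\frr$-module isomorphism $W_i^+ \to W_i^-$ (with inverse $c_i^{-1}D|_{W_i^-}$), so $W_i^+ - W_i^- = 0$ too. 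If $c_i = 0$, then $\Im(D|_{W_i^\pm})\subset\Ker(D|_{W_i^\mp})$, and the short exact sequences
\[
0 \to \Ker(D|_{W_i^\pm}) \to W_i^\pm \to \Im(D|_{W_i^\pm}) \to 0
\]
give $W_i^\pm = \Ker(D|_{W_i^\pm}) + \Im(D|_{W_i^\pm})$ in the Grothendieck group. Rearranging the alternating sum yields
\[
H_D(W_i)^+ - H_D(W_i)^- = \bigl(\Ker D^+ - \Im D^-\bigr) - \bigl(\Ker D^- - \Im D^+\bigr) = W_i^+ - W_i^-.
\]

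Summing the identity over all $i$ gives the claimed equality $I_D(V) = V\otimes S^+ - V\otimes S^-$. The sum is infinite, but this causes no difficulty in the Grothendieck group of $\frr$-admissible modules: each irreducible $\frr$-type appears in only finitely many $W_i$, so the sum is well defined and coincides termwise on each $\frr$-type with the right-hand side. The equal-rank hypothesis enters only insofar as it ensures that $S^+$ and $S^-$ are genuinely $\frr$-modules, so that the two sides of the desired equation make sense in the stated form. I expect the main conceptual obstacle to be exactly the dichotomy on the $D^2$-eigenvalues: this is the precise place where having an honest infinitesimal character is essential, and it is what fails in the generalized-infinitesimal-character setting addressed later in the paper.
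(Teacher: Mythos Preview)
Your proof is correct and follows essentially the same strategy as the paper's proof of the more general Theorem~\ref{thm higher index} (the paper itself only cites \cite{MPV} for Proposition~\ref{thm index ic}): use the formula for $D^2$ to split off and cancel the pieces on which $D$ is invertible, then handle the zero piece by a rank--nullity argument. The only cosmetic difference is that you decompose by $\frr$-isotypic components while the paper decomposes by generalized $D^2$-eigenspaces; under the infinitesimal-character hypothesis the former refines the latter, so the two arguments coincide.
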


In the case when a module $V$ does not have an infinitesimal character but only a
generalized infinitesimal character, Proposition \ref{six-term ic} and Proposition \ref{thm index ic} can fail.
We show this by exhibiting an explicit example in Section \ref{sl2example}. 
Then we define the notion of higher Dirac cohomology in Section \ref{sec:hdc} (Definition \ref{defhdc}). We show that with this notion, an analogue of Proposition \ref{thm index ic} always holds (Theorem \ref{thm higher index}).
We also show that an analogue of Theorem \ref{Vogan conjecture} holds for higher Dirac cohomology (Theorem \ref{hdc vogan}). In Section
\ref{sec:triangles} we show that to any short exact sequence of $(\frg,\frr)$-modules we can attach an exact triangle as in 
(\ref{exact triangle}); the construction is however not natural. 

 Finally, in Section \ref{sec:other} we mention some other constructions of ``higher Dirac cohomology" (and homology) functors. Some of them give the same notion as the one defined in Section \ref{sec:hdc}, and some of them give different notions which might prove useful in other contexts.

%%%%%%%%%%%%%%%%%%%%%%%%%%%%%%%%%%%%%%%%%%%%%%%%%%%%%%%%%%%%%%%%%%%%%%%%%%%%%%
\section{Motivating Examples}
\label{examples}
%%%%%%%%%%%%%%%%%%%%%%%%%%%%%%%%%%%%%%%%%%%%%%%%%%%%%%%%%%%%%%%%%%%%%%%%%%%%%%
\subsection{An $\frs\frl(2)$ example}
\label{sl2example}

Let $\gog=\frs\frl(2,\mC)$, with standard basis $e,f,h$ satisfying the commutation relations
\begin{eqnarray}
[e,f]=h,\, [h,e]=2e,\, [h,f]=-2f.
\end{eqnarray}

For the subalgebra $\gor$ we take the one-dimensional subalgebra spanned by $h$. 
Then $(\frg,\frr)$-modules are Harish-Chandra modules for the group $SU(1,1)\cong SL(2,\bbR)$, and
they include the integral highest weight modules with respect to the Borel subalgebra $\frb$ of
$\frg$ spanned by $h$ and $e$.

We are going to construct a module $P$, which is a nontrivial extension of the two Verma
modules with trivial infinitesimal character, $V_0$ with highest $\gor$-weight 0,
and $V_{-2}$ with highest $\gor$-weight $-2$:
 
\begin{eqnarray}
\label{sesP}
0\to V_{0}\to P\to V_{-2}\to 0.
\end{eqnarray}
Recall that $V_{-2}$ is irreducible, while $V_0$ has a submodule isomorphic to 
$V_{-2}$, with quotient equal to the trivial module $\bbC_0$.

(The module we consider is denoted by $P$ because it is a projective 
object in the Bernstein-Gelfand-Gelfand category ${\fam2 O}$ of $\frs\frl(2,\mC)$ with
respect to the Borel subalgebra $\frb$. This fact is however not
important for our considerations.) 

Let us denote an $r$-module basis of 
$V_0$, resp. $V_{-2}$ by $\{v_{-2i}\}_{i\in\bbZ_+}$, resp. $\{w_{-2i-2}\}_{i\in\bbZ_+}$.
In each case, the subscript denotes the $h$-eigenvalue of the corresponding vector.
Since $P=V_0\oplus V_{-2}$ as a vector space, $v_i$ and $w_j$ form a basis
of $P$. We define the action of $e$ and $f$ on these basis elements by
\begin{eqnarray}
\label{actionP} 
& & ev_{-2k}=(-k+1)v_{-2k+2}, \nonumber \\
& & fv_{-2k}=(k+1)v_{-2k-2}, \nonumber \\
& & ew_{-2k}=(-k+1)w_{-2k+2}+\frac{1}{k}v_{-2k+2}, \nonumber \\
& & fw_{-2k}=(k+1)w_{-2k-2},
\end{eqnarray}
where $w_0$ is defined to be 0. We can describe this action by the following picture:

\vspace{0.3cm}
\xymatrix{
& & P:& v_0 \ar@/_/[d]_1  & & \\
& & & v_{-2} \ar@/_/[d]_2 \ar@/_/[u]_{0} & &  w_{-2} \ar@/_/[d]_2 \ar@/_/[llu]_1 \\
& & & v_{-4} \ar@/_/[d]_3 \ar@/_/[u]_{-1} & &  w_{-4} \ar@/_/[d]_3 \ar@/_/[u]_{-1} \ar@/_/[llu]_{\frac{1}{2}} \\
& & & v_{-6} \ar@/_/[u]_{-2} & & w_{-6} \ar@/_/[u]_{-2} \ar@/_/[llu]_{\frac{1}{3}}\\
& & & \dots &  & \dots
}
\vspace{0.3cm}

Here the action of $e$ is represented by upward arrows, the action of $f$ by 
downward arrows, and the numbers by the arrows represent the coefficients in the 
action computed in the basis. 

It is easy to check that we have indeed defined an $\frs\frl(2,\bbC)$-module. 
(The only thing that needs to be checked is $ef-fe=h$ on each basis vector, and that
is seen by a straightforward computation using the above formulas for the action.) 
It is moreover clear that the $v_k$ span a submodule isomorphic to $V_0$, and that the
quotient is isomorphic to $V_{-2}$, spanned by the classes of the $w_k$s. In other words, $P$
indeed fits into the short exact sequence (\ref{sesP}).

We now want to describe the action of the Dirac operator
$$
D=e\otimes f+f\otimes e
$$
on $P\otimes S$. Here $S=\bbC 1\oplus\bbC e$ is the spin module for the complexified 
Clifford algebra $C(\bbC e\oplus \bbC f)$, with action given by
\begin{eqnarray} 
& e\cdot 1=e, & e\cdot e=0;\nonumber \\
& f\cdot 1=0, & f\cdot e=-2.
\end{eqnarray}
Furthermore, the $\frr$-weight of $1$ is $-1$, while the $\frr$-weight of $e$ is $1$ (because the spin action of $\frr$ on $S$ is equal to the adjoint action shifted by $-\rho$.)

Using this and the formulas (\ref{actionP}) for the action of $\frg$ on $P$, one gets

\begin{eqnarray}
\label{D on P}
& & D(v_{-2k}\otimes 1)=(k+1)v_{-2k-2}\otimes e,\nonumber \\
& & D(v_{-2k}\otimes e)=2(k-1)v_{-2k+2}\otimes 1,\nonumber \\
& & D(w_{-2k}\otimes 1)=(k+1)w_{-2k-2}\otimes e,\nonumber \\
& & D(w_{-2k}\otimes e)=2(k-1)w_{-2k+2}\otimes 1+\frac{1}{k}v_{-2k+2}\otimes 1,
\end{eqnarray}
where for the last formula, $w_0$ should be replaced by $0$.

One can iterate these formulas and get similar formulas for the higher powers of $D$. 
It follows from (\ref{D on P}) that the generalized 0-eigenspace of $D$ is spanned by
\begin{eqnarray}
\label{gen 0 eig}
v_0\otimes 1,\, v_0\otimes e,\, v_{-2}\otimes e,\, w_{-2}\otimes e,
\end{eqnarray}
and that the action of $D$ on these vectors is given as
\begin{eqnarray}
w_{-2}\otimes e\mapsto v_0\otimes 1\mapsto v_{-2}\otimes e\mapsto 0;&\nonumber \\
v_0\otimes e\mapsto 0.&
\end{eqnarray}
This implies that the Dirac cohomology of $P$, $\Ker(D)/\Im(D)\cap\Ker(D)$, is spanned by
(the class of) $v_0\otimes e$. Namely, the other vector in $\Ker(D)$, $v_{-2}\otimes e$, is
also in $\Im(D)$.

Doing similar (but easier) explicit computations, or recalling the general result from \cite{hx}, Proposition 4.11,
one checks that the Dirac cohomology of $V_0$ is spanned by (the class of) $v_0\otimes e$, while
the Dirac cohomology of $V_{-2}$ is spanned by (the class of) $w_{-2}\otimes e$. We see that
\[
H_D^+(V_0)=H_D^+(P)=H_D^+(V_{-2})=0,
\]
while $H_D^-(V_0)$, $H_D^-(P)$ and $H_D^-(V_{-2})$ are all one-dimensional. It follows that there can
be no six-term exact sequence of the form

\vspace{0.3cm}
\xymatrix{
& & & H_D^+(V_0) \ar[r] & H_D^+(P) \ar[r] & H_D^+(V_{-2}) \ar[d] \\
& & & H_D^-(V_{-2}) \ar[u] & \ar[l] H_D^-(P)  & \ar[l] H_D^-(V_0) 
}
\vspace{0.3cm}

We now want to show that the basic property of index, Proposition \ref{thm index ic}, does not hold for $P$, i.e., that
\eq
\label{bad index}
H_D^+(P)-H_D^-(P) \neq P\otimes S^+-P\otimes S^-
\eeq
as virtual $\frr$-modules. To see this, we first note that the generalized eigenspaces for $D$ for
nonzero eigenvalues can not contribute to either side of (\ref{bad index}); see the proof of Theorem \ref{thm higher index}.
On the other hand, the contribution of the vectors (\ref{gen 0 eig}) to 
$P\otimes S^+-P\otimes S^-$ is
\[
\bbC_{-1} - \bbC_1 - \bbC_{-1} - \bbC_{-1} = - \bbC_1 - \bbC_{-1},
\]
while
\[
H_D^+(P)-H_D^-(P) =  - \bbC_1.
\]
(Here $\bbC_\lambda$ denotes the one-dimensional $\frr$-module of weight $\lambda$.)

So we conclude that (\ref{bad index}) holds, i.e., that Proposition \ref{thm index ic} fails for $P$. The reason is the fact that
the Jordan cell of length 3 present in the generalized 0-eigenspace for $D$ contributes to  $P\otimes S^+-P\otimes S^-$,
but does not contribute to $H_D(P)$. We will resolve this problem by modifying the definition of Dirac cohomology to 
what we call higher Dirac cohomology. As we will see, the Jordan cell of length 3 will contribute to the higher Dirac cohomology,
and an analogue of Proposition \ref{thm index ic} will hold for $P$ (and in general) when $H_D$ is replaced with the higher Dirac cohomology.

Note that in the above example the index is not additive with respect to short exact sequences, i.e., $I_D(P)\neq I_D(V_0)+I_D(V_{-2})$. This means 
$I_D$ is not well defined on the Grothendieck group of a category of $(\frg,\frr)$-modules which contains modules like $P$.

%%%%%%%%%%%%%%%%%%%%%%%%%%%%%%%%%%%%%%%%%%%%%%%%%%%%%%%%%%%%%%%%%%%%%%%%%%%%%
\subsection{Relative Dirac cohomology and branching problems for generalized Verma modules}
In fact, there is a natural reservoir of potential applications of higher 
Dirac cohomology in practice. Let us first recall some facts from \cite{hp}, Section 9.
Let $\gog=\gor\oplus\gos$ be as before, and let $\gor_1\subset\gor$ be another quadratic subalgebra
with orthogonal complement in $\frg$ equal to $\gos_1$. Then $\gog=\gor_1\oplus\gos_1$ and $\gos\subset\gos_1$,
so we can consider $C(\frs)$ as a subalgebra of $C(\frs_1)$.  
The (relative) cubic Dirac operator $D_\triangle(\gor,\gor_1)$ can be defined as the image of a certain diagonal embedding
of $U(\frr)\otimes C(\frr\cap\frs_1)$ into $U(\frg)\otimes C(\frs_1)$, so that 
$$
D(\gog,\gor_1)=D(\gog,\gor)+D_\triangle(\gor,\gor_1).
$$ 
and the summands $D_\triangle(\gor,\gor_1)$ and $D_\triangle(\gor,\gor_1)$ anticommute.
Under appropriate assumptions, some elementary linear algebra arguments imply that for a $(\frg,\frr)$-module $V$,
$H_D(V)$ can be calculated in stages: 
$$
H_{D(\gog,\gor_1)}(V)=H_{D_\triangle(\gor,\gor_1)}(H_{D(\gog,\gor)}(V))
$$ 
where $H_{D(\gog,\gor)}(V)$ is understood as an $\gor$-module. 

We would like to generalize the above considerations to the situation of an embedding of pairs of Lie algebras:
$$
(\gog',\gor')\hookrightarrow (\gog,\gor)
$$
and a $(\gog,\gor)$-module $V$. We are especially interested in the case when $\gog$ and $\gog'\subset\gog$ 
are simple Lie algebras, $\gor=\frl$ resp. $\gor'=\frl'\subset\frl$ are Levi subalgebras of 
compatible parabolic subalgebras $\gop$ resp. $\gop'\subset\gop$, and $V$ is a highest weight module for the pair
$(\frg,\frp)$. Recall that $\gop,\gop'$ are called compatible parabolic 
subalgebras provided there is a hyperbolic element in $\gop'$ responsible for the grading 
structure on the nilradical $\gon\subset\gop$.

\begin{question} 
Give the proper definition of the relative Dirac operator for compatible pairs $(\gog',\gol')\hookrightarrow (\gog,\gol)$
as above, i.e., with $\gol'\subset\gol$ the Levi subalgebras of compatible parabolic subalgebras $\frp'\subset\frp$, and
examine its role in understanding the branching problem for $(\gog,\gog')$ applied
to Bernstein-Gelfand-Gelfand parabolic category ${\fam2 O}^\gop$ (and specifically, 
to generalized Verma modules in ${\fam2 O}^\gop$). 
\end{question}

To be more specific, let us consider the couple of compatible orthogonal Lie algebras and their 
conformal parabolic subalgebras:
\begin{eqnarray}
& & \gog=\frs\fro(n+1,1,\mR)\otimes\mC,\qquad  \gop=((\frs\fro(n,\mR)\times\mR^\star)\ltimes\mR^n)\otimes\mC,
\nonumber \\
& & \gog'=\frs\fro(n,1,\mR)\otimes\mC,\qquad  \gop'=((\frs\fro(n-1,\mR)\times\mR^\star)\ltimes\mR^{n-1})\otimes\mC.
\end{eqnarray}
We denote by $\frn$ resp. $\frn'$ the nilradicals of $\frp$ resp. $\frp'$, and by $\frn_-$ resp. $\frn'_-$ the 
opposite nilradicals. Then $\gon_-,\gon_-'$ are commutative, $\gon_-'$ is of codimension 1 in $\gon_-$ and its 
one-dimensional complement is spanned by the lowest root space of $\gon_-$ (i.e., the lowest root space of $\gog$.) 

Let us consider the scalar generalized Verma-module $M^\gog_\gop(\mC_\lambda)$ for $(\gog, \gop)$,
induced from a character $\xi_\lambda: \gop\to\mC$, $\lambda\in\mC$.
The branching problem for the couple $(\gog,\gop), (\gog',\gop')$ and this class of modules
was recently studied by Kobayashi, \O rsted, Somberg and Sou\v cek \cite{koss}, \cite{kossII}. The approach in this
work was analytic, using a suitable Fourier transform to turn 
the combinatoral problem for finding the singular vectors into the question about solutions 
of the system of partial differential equations of hypergeometric type (the ``F-method").
For generic values of $\lambda\in\mC$, there is a $\gog'$-module isomorphism
\begin{eqnarray}\label{decompverma}
M^\gog_\gop(\lambda)|_{(\gog',\gop')}\cong \oplus_{j=0}^\infty M^{\gog'}_{\gop'}(\lambda-j),
\end{eqnarray}
where the $\gog'$-modules on the right hand side are induced from the $1$-dimensional $\frp'$-modules $\mC_{\lambda-j}$,
and their singular vectors are given by Gegenbauer polynomials.

It follows from \cite{hx} that the $(\gog,\gol)$-Dirac cohomology of the left hand side is
$$ 
H_{D(\gog,\gol)}(M^\gog_\gop(\lambda))= \mC_\lambda\otimes\mC_{\rho(\gon_-)},
$$ 
while the $(\gog',\gol')$-Dirac cohomology of the right hand side is 
$$
H_{D(\gog',\gol')}(\oplus_{j=0}^\infty M^{\gog'}_{\gop'}(\lambda-j))= 
\oplus_{j=0}^\infty\mC_{\lambda-j}\otimes\mC_{\rho(\gon_-')}. 
$$  
Here $\rho(\gon_-)$ resp. $\rho(\gon_-')$ denote the half sums 
of roots in $\gon_-$ resp. $\gon_-'$.

We can define a copy of the Lie algebra $\frs\frl(2,\bbC)$ corresponding to the one-dimensional
complement of $\frn_-'$ in $\frn_-$, which acts on the singular vectors
of the modules $M^{\gog'}_{\gop'}(\lambda-j)$ in the above decomposition. Denoting by $D_\Delta$
the corresponding Dirac operator, we see that
\begin{eqnarray}\label{stages}
H_{D(\gog,\gol)}(M^\gog_\gop(\lambda))=
H_{D_\triangle}(H_{D(\gog',\gol')}(M^\gog_\gop(\lambda)|_{\gog'})).
\end{eqnarray}

However, for the non-generic values of inducing parameter $\lambda\in\mC$,
on the right hand side of (\ref{decompverma}) there appear non-trivial 
extensions in ${\fam2 O}^{\gop'}$ of generalized Verma modules.
In these cases, (\ref{stages}) is no longer true, and to obtain an analogous statement
one has to replace the ordinary Dirac cohomology by the higher Dirac cohomology which we define
in this article.

These questions together with many other examples will be
discussed in \cite{ps2}. 

%%%%%%%%%%%%%%%%%%%%%%%%%%%%%%%%%%%%%%%%%%%%%%%%%%%%%%%%%%%%%%%%%%%%%%%%%%%%%%
%%%%%%%%%%%%%%%%%%%%%%%%%%%%%%%%%%%%%%%%%%%%%%%%%%%%%%%%%%%%%%%%%%%%%%%%%%%%%%

\section{Higher Dirac cohomology}
\label{sec:hdc}

As we have seen in the previous section, the usual Dirac cohomology functor $V\mapsto H_D(V)$ does not
behave well on finite length modules $V$ which do not have infinitesimal character. Our goal in this section
is to define a new version of Dirac cohomology, $V\mapsto H(V)$, with better behavior. 
In case $V$ has infinitesimal character, we want $H(V)$ to be the same as $H_D(V)$.
Moreover, in the equal rank case, $S=S^+\oplus S^-$ should induce $H(V)=H(V)^+\oplus H(V)^-$ for any $V$, in such a way that 
for any $V$ of finite length, $V\otimes S^+-V\otimes S^-=H(V)^+-H(V)^-$. 

We would also like to have an analogue of Vogan's conjecture (Theorem \ref{Vogan conjecture}) for $H(V)$, where $V$ is a module with generalized infinitesimal character. 

Finally, if $0\to U\to V\to W\to 0$ is a short exact sequence of $(\frg,\frr)$-modules, there should be an exact triangle
\bigskip

\xymatrix{
& & & H(U) \ar[rr] & & H(V)\ar[ld] \\
& & & &  H(W)\ar[lu] & &  \\ 
}
\bigskip

There are quite a few possible constructions that satisfy many of the above properties; we list some of them in Section \ref{sec:other}. 
A definition that comes close to satisfying all of the above properties is as follows.

\begin{definition}
\label{defhdc}
For any integer $k\geq 0$, define 
\eq
\label{hdc1}
H^k(V) = \im D^{2k}\cap \ker D \big/ \im D^{2k+1}\cap\ker D.
\eeq
We further define 
\eq 
\label{hdc2}
H(V)=\bigoplus_{k\in\bbZ_+} H^k(V).
\eeq 
We call $H^k(V)$ and $H(V)$ the higher Dirac cohomology of $V$.
\end{definition}

Note that $H^0(V)=\ker D \big/ \im D\cap\ker D$ is the old notion $H_D(V)$.

Let $V$ be a $(\frg,\frr)$-module and let $(V\otimes S)_{[0]}$ denote the generalized 0-eigenspace for $D$ acting on $V\otimes S$.
Then $(V\otimes S)_{[0]}$ is a finite-dimensional space, and it can be decomposed into a direct sum of Jordan blocks for $D$ of various sizes. 
By a Jordan block of size $k$ we mean a space
\eq
\label{block}
V_1\oplus V_2\oplus \dots \oplus V_k,
\eeq
where the $V_i$ are copies of a single $\frr$-type, and 
\eq
\label{block2} 
\begin{CD}
D(V_1)=0; \qquad D: V_i @>\cong>> V_{i-1},\quad i=2,\dots, k, 
\end{CD}
\eeq
while $V_k$ is not in the image of $D$.
Note that since $D$ is $\frr$-invariant, it can only map an $\frr$-module onto
an isomorphic $\frr$-module, so we can have a block as above only with copies of the same $\frr$-type.

\begin{theorem}
\label{hdc block}
Let $V$ be a $(\frg,\frr)$-module, and let $k\geq 0$ be an integer. Choose any decomposition of $(V\otimes S)_{[0]}$ into Jordan blocks, and let
$E_1,\dots,E_r$ be the bottom $\Kt$-types of all blocks of size $2k+1$. Then 
\[
H^k(V)\cong E_1\oplus\dots\oplus E_r.
\]
In particular, $H(V)$ is isomorphic to the direct sum of all bottom $\Kt$-types of odd size Jordan blocks.
\end{theorem}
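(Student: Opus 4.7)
The plan is to reduce the entire computation to the finite-dimensional nilpotent action of $D$ on the generalized $0$-eigenspace $(V\otimes S)_{[0]}$, and then read off $H^k(V)$ block by block.

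First I would show that both $\ker D$ and each $\ker D\cap\im D^j$ already live in $(V\otimes S)_{[0]}$. Decompose $V\otimes S=(V\otimes S)_{[0]}\oplus W$, where $W$ is the sum of generalized eigenspaces of $D$ for all nonzero eigenvalues; both summands are $D$-stable and $D|_W$ is invertible. Hence $\ker D\subset(V\otimes S)_{[0]}$, and a short argument using the direct-sum decomposition shows
\[
\ker D\cap\im D^j \;=\; \ker D\cap D^j\bigl((V\otimes S)_{[0]}\bigr)
\]
for every $j\geq 0$ (if $x=D^jy$ with $y=y_0+y_W$, then $D^jy_W\in W$ while $x\in(V\otimes S)_{[0]}$ forces $D^jy_W=0$, hence $y_W=0$). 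Thus only the finite-dimensional nilpotent operator $D|_{(V\otimes S)_{[0]}}$ matters.

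Next I would justify the existence of an $\frr$-equivariant Jordan block decomposition of $(V\otimes S)_{[0]}$. By hypothesis $V|_\frr$, and hence $V\otimes S$ and $(V\otimes S)_{[0]}$, is a direct sum of finite-dimensional irreducible $\frr$-modules. Decomposing isotypically as $\bigoplus_\alpha E_\alpha\otimes M_\alpha$ with $M_\alpha$ a finite-dimensional multiplicity space, Schur's lemma forces the $\frr$-invariant nilpotent $D$ to act on the $\alpha$-component as $1\otimes N_\alpha$ for some nilpotent $N_\alpha\in\End(M_\alpha)$. Putting each $N_\alpha$ in Jordan normal form yields a decomposition of $(V\otimes S)_{[0]}$ into $\frr$-invariant blocks of the shape (\ref{block}), (\ref{block2}); in particular, the $\frr$-types $V_1,\dots,V_n$ inside a single block are all mutually isomorphic, as required.

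Now the block-by-block computation is elementary. In a Jordan block of size $n$ with bottom $\Kt$-type $V_1$, one has $D^j\bigl(V_1\oplus\cdots\oplus V_n\bigr)=V_1\oplus\cdots\oplus V_{n-j}$ (zero if $j\geq n$), while $\ker D$ meets the block in $V_1$. Therefore $\ker D\cap\im D^{2k}$ restricted to the block equals $V_1$ if $n\geq 2k+1$ and $0$ otherwise, and similarly $\ker D\cap\im D^{2k+1}$ equals $V_1$ if $n\geq 2k+2$ and $0$ otherwise. The quotient (\ref{hdc1}) on the block is thus $V_1$ precisely when $n=2k+1$ and zero in all other cases. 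Summing over blocks gives $H^k(V)\cong E_1\oplus\cdots\oplus E_r$ as stated, and summing over $k$ identifies $H(V)$ with the direct sum of bottom $\Kt$-types of all odd-size blocks.

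I do not anticipate a hard step: the only nontrivial subtlety is producing the Jordan decomposition $\frr$-equivariantly, which the isotypic-and-Schur argument above resolves. Independence of the resulting isomorphism class of $H^k(V)$ from the choice of block decomposition follows from the general fact that the multiset of block sizes, together with the isomorphism classes of the bottom types, is an invariant of a nilpotent equivariant endomorphism of a semisimple $\frr$-module.
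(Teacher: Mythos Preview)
Your proposal is correct and follows essentially the same route as the paper: reduce to $(V\otimes S)_{[0]}$, then check case by case which Jordan blocks contribute to $\im D^{2k}\cap\ker D\big/\im D^{2k+1}\cap\ker D$. The paper's proof is terser---it takes the reduction to $(V\otimes S)_{[0]}$ and the existence of an $\frr$-equivariant Jordan decomposition as already established in the paragraph preceding the theorem---whereas you spell these out via the isotypic/Schur argument and the explicit check that $\ker D\cap\im D^j$ sees only the nilpotent part; but the block-by-block case analysis ($m<2k+1$, $m=2k+1$, $m>2k+1$) is identical.
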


\begin{proof}
Note that $\ker D$ is exactly the sum of the bottom $\Kt$-types in all blocks in the decomposition of $(V\otimes S)_{[0]}$. Let $E$ be one such bottom
$\tilde K$-type, in a Jordan block of length $m$. 

If $m<2k+1$, then $E$ is not contained in $\im D^{2k}$ and hence $E$ does not contribute to $H^k(V)$. 
If $m>2k+1$, then $E$ is contained in $\im D^{2k+1}$ and hence $E$ does not contribute to $H^k(V)$. 
Finally, if $m=2k+1$, then $E$ is contained in $\im D^{2k}$ and not contained in $\im D^{2k+1}$, so $E$ contributes to $H^k(V)$.
\end{proof}

\begin{corollary} If $V$ is a $(\frg,\frr)$-module with infinitesimal character, then $H(V)=H^0(V)=H_D(V)$.
\end{corollary}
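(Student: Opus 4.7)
My plan is to combine the Parthasarathy/Kostant formula for $D^2$ with the structural description of $H(V)$ provided by Theorem \ref{hdc block}. The whole point is to show that when $V$ has infinitesimal character, all Jordan blocks for $D$ on the generalized $0$-eigenspace $(V\otimes S)_{[0]}$ have size at most $2$, so that odd-size blocks can only have size $1$, and the higher terms $H^k(V)$ for $k\ge 1$ vanish automatically.

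First I would recall that the cubic analogue of (\ref{D squared}), which Kostant established for the pair $(\frg,\frr)$, expresses $D^2$ as $\Cas_{\frr_\Delta}+\|\rho_\frr\|^2 - \Cas_\frg\otimes 1 - \|\rho_\frg\|^2$. Since $D$ is $\frr$-invariant (in fact $\Kt$-invariant in the equal-rank setting, and more generally $\frr_\Delta$-invariant), it preserves each $\frr$-isotypic component of $V\otimes S$. On such an isotypic component, $\Cas_{\frr_\Delta}$ acts by a single scalar, and because $V$ has infinitesimal character, $\Cas_\frg\otimes 1$ also acts by a single scalar. Consequently $D^2$ acts as a scalar $c$ on each $\frr$-isotypic piece of $V\otimes S$.

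On any isotypic component with $c\ne 0$ the operator $D$ is invertible, so such components contribute nothing to $(V\otimes S)_{[0]}$. The generalized zero-eigenspace is therefore the sum of those isotypic components on which $D^2=0$. On each such piece $D$ is nilpotent of square zero, so in any Jordan decomposition for $D$ every block has size $1$ or $2$.

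Now I apply Theorem \ref{hdc block}: $H^k(V)$ equals the sum of the bottom $\frr$-types of all Jordan blocks of size exactly $2k+1$. Since no block has size $\ge 3$, this gives $H^k(V)=0$ for every $k\ge 1$, so $H(V)=H^0(V)$. The identification $H^0(V)=H_D(V)$ is the remark immediately following Definition \ref{defhdc}, so the corollary follows. I do not anticipate a genuine obstacle here; the only thing to be careful about is invoking the Kostant cubic analogue of (\ref{D squared}) rather than (\ref{D squared}) itself, and noting that infinitesimal character is exactly what promotes $D^2$ from an operator valued in $Z(\frr_\Delta)\otimes Z(\frg)$ to a scalar on each $\frr$-isotypic component.
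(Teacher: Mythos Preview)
Your argument is correct and follows the same route as the paper: both use the formula for $D^2$ together with the assumption of infinitesimal character to conclude that $D^2$ acts semisimply (hence Jordan blocks in $(V\otimes S)_{[0]}$ have size at most $2$), and then invoke Theorem \ref{hdc block} to obtain $H^k(V)=0$ for $k\ge 1$. The paper's version is terser, simply asserting that $D^2$ is semisimple when $V$ has infinitesimal character, whereas you spell out why via the scalar action of $\Cas_\frg$ and $\Cas_{\frr_\Delta}$ on isotypic components.
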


\begin{proof} Since $V$ has infinitesimal character, $D^2$ is a semisimple operator on $V\otimes S$, so Jordan blocks in $(V\otimes S)_{[0]}$ 
can only have sizes 1 and 2. It follows that $H^k(V)=0$ for $k>0$, and this implies the claim.
\end{proof}

We are now going to prove a generalization of Vogan's conjecture, Theorem \ref{Vogan conjecture}. 
As there, we take a Cartan subalgebra $\frh$ of $\frg$ containing
a Cartan subalgebra $\frt$ of $\frr$, and we view $\frt^*$ as a subspace of $\frh^*$.

\begin{theorem} 
\label{hdc vogan}
Let $V$ be a $(\frg,\frr)$-module with generalized infinitesimal character corresponding to $\Lambda\in\frh^*$. Let $E_\gamma$ be
a $\tilde K$-type in $H(V)$. Then $\Lambda$ is $W_\frg$-conjugate to $\gamma+\rho_\frk$. 
\end{theorem}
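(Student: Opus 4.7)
The plan is to adapt the Huang-Pand\v zi\'c-style proof of Theorem~\ref{Vogan conjecture} (see \cite{HP1}) to the higher Dirac cohomology and generalized infinitesimal character setting. Recall the main algebraic ingredient: there is an algebra homomorphism $\zeta : Z(\frg) \to Z(\frr_\Delta)$ such that for every $z \in Z(\frg)$,
\[
z \otimes 1 - \zeta(z) = Da + aD
\]
for some $a \in U(\frg) \otimes C(\frs)$, which may be taken $\tilde K$-invariant by averaging. The defining compatibility of $\zeta$ with the Harish-Chandra isomorphisms of $Z(\frg)$ and $Z(\frr)$ translates the equality $\chi_\Lambda(z) = \chi_{\gamma + \rho_\frk}(\zeta(z))$ for all $z \in Z(\frg)$ into $W_\frg$-conjugacy of $\Lambda$ and $\gamma + \rho_\frk$.

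By Theorem~\ref{hdc block}, the hypothesis $E_\gamma \subseteq H(V)$ means that the $E_\gamma$-isotypic component of $(V \otimes S)_{[0]}$ contains an odd-size Jordan block for $D$. By Schur's lemma this component takes the form $E_\gamma \otimes N_\gamma$, where $N_\gamma$ is a finite-dimensional multiplicity space on which $D$ acts as a nilpotent operator $n$ and each $z \otimes 1$ acts as a commuting operator $\tilde z$; the generalized infinitesimal character of $V$ forces $\tilde z$ to have unique generalized eigenvalue $\chi_\Lambda(z)$. Setting $c_z := \chi_{\gamma + \rho_\frk}(\zeta(z))$ (the scalar by which $\zeta(z)$ acts on $E_\gamma$) and passing the identity of the first paragraph to this component gives, in $\End(N_\gamma)$,
\[
W := \tilde z - c_z \id = n a' + a' n
\]
for some $a' \in \End(N_\gamma)$; consequently $W$ commutes with $n$ and inherits $\chi_\Lambda(z) - c_z$ as its unique generalized eigenvalue.

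The heart of the argument is a within-block linear-algebra computation. Fix a Jordan basis of $n$. Since $W$ commutes with $n$, its diagonal entries are constant on each Jordan block of $n$; call this common value $A_j$ on block $j$. Every $A_j$ equals the unique generalized eigenvalue $\chi_\Lambda(z) - c_z$ of $W$. A direct check shows that within a block of size $m_j$ the entry $W_{l,l}$ depends only on the within-block subdiagonal entries $b_k := a'_{k+1,k}$: one has $W_{l,l} = b_l + b_{l-1}$ for $1 < l < m_j$, while $W_{1,1} = b_1$ and $W_{m_j, m_j} = b_{m_j - 1}$ at the endpoints. Imposing all of these equal $A_j$ yields the recursion $b_l = A_j$ for $l$ odd and $b_l = 0$ for $l$ even; the right-endpoint condition $b_{m_j - 1} = A_j$ is compatible with this recursion iff $m_j$ is even, and hence for odd $m_j$ one must have $A_j = 0$.

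Since by hypothesis some Jordan block of $n$ is of odd size, this forces $\chi_\Lambda(z) - c_z = 0$, i.e., $\chi_\Lambda(z) = \chi_{\gamma + \rho_\frk}(\zeta(z))$ for every $z \in Z(\frg)$, and the recalled property of $\zeta$ then yields $\Lambda \in W_\frg \cdot (\gamma + \rho_\frk)$. The main technical step is the within-block computation of the preceding paragraph; the multi-block case reduces at once to the single-block analysis because $W_{l,l}$ depends only on within-block entries of $a'$, which is immediate from the block-respecting action of $n$.
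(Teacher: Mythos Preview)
Your reduction to the multiplicity space $N_\gamma$ and the within-block computation are both correct: writing $W=\tilde z-c_z=na'+a'n$ and using $[W,n]=0$, you correctly deduce that the diagonal entries of $W$ are constant along each Jordan block of $n$, and that this constant $A_j$ vanishes whenever $m_j$ is odd.

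The gap is the assertion ``Every $A_j$ equals the unique generalized eigenvalue $\chi_\Lambda(z)-c_z$ of $W$.'' You give no argument for this, and in fact it is false under your hypotheses. Take $N_\gamma$ four-dimensional with $n$ consisting of two Jordan blocks of size $2$, and let
\[
a'=\begin{pmatrix}0&0&0&0\\0&0&1&0\\0&0&0&0\\-1&0&0&2\end{pmatrix},\qquad
W=na'+a'n=\begin{pmatrix}0&0&1&0\\0&0&0&1\\-1&0&2&0\\0&-1&0&2\end{pmatrix}.
\]
One checks $[W,n]=0$ and $(W-I)^2=0$, so $W$ has unique generalized eigenvalue $\lambda=1$; yet $A_1=0$ and $A_2=2$, neither equal to $\lambda$. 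Thus knowing $A_{j_0}=0$ for an odd block does not, by itself, force $\lambda=0$. (In this particular example there is no odd block, but your claimed identity $A_j=\lambda$ makes no use of parity, so its failure here shows the step is unjustified.)

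The paper closes this gap by exploiting one more piece of structure you have available but did not use: the element $a$ may be taken $\tilde K$-invariant, and since $D^2$ is central in $(U(\frg)\otimes C(\frs))^{\tilde K}$, one has $[a,D^2]=0$, hence $[a',n^2]=0$ on $N_\gamma$. With this, for $x=n^{2k}y\in\im n^{2k}\cap\ker n$ one gets
\[
Wx=na'x=na'n^{2k}y=n^{2k+1}a'y\in\im n^{2k+1}\cap\ker n,
\]
so $W$ annihilates the subquotient $H^k(V)=\im n^{2k}\cap\ker n/\im n^{2k+1}\cap\ker n$. Since $W$ inherits the unique generalized eigenvalue $\chi_\Lambda(z)-c_z$ on any nonzero subquotient, this forces $\chi_\Lambda(z)=c_z$. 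Your matrix computation can be salvaged along these lines, but as written the passage from $A_{j_0}=0$ to $\lambda=0$ is a genuine gap.
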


\begin{proof}

Recall from \cite{HP1} or \cite{hp} that the proof for $H_D(V)$, in case when $V$ has infinitesimal character, is based on writing 
$z\in Z(\frg)$ as
\[
z\otimes 1 = \zeta(z) + Da+aD,
\]
with $\zeta(z)\in Z(\frk_\Delta)$ and $a\in (U(\frg)\otimes C(\frp))^K$.
Since $Da+aD$ acts as 0 on $H_D(V)$, $z\otimes 1$ acts in the same way as $\zeta(z)$ and the claim follows, since $\zeta:Z(\frg)\to Z(\frk_\Delta)$ can be written explicitly in terms of Harish-Chandra isomorphisms.

In the present situation, we only need to show that $Da+aD$ acts as 0 on $H^k(V)$ for each $k$. It is clear that
$aD$ acts as 0, so it remains to show that $Da$ acts as 0.

Let $x\in\im D^{2k}\cap\ker D$, so $x=D^{2k}y$ and $Dx=0$. 
Since $D^2$ is central in $(U(\frg)\otimes C(\frp))^K$,
\[
Dax=DaD^{2k}y=D^{2k+1}ay\in \im D^{2k+1}.
\]
Moreover, $D(Dax)=aD^2x=0$, so $Dax\in \im D^{2k+1}\cap\ker D$. So $Da$ sends the class of $x$ to 0.
\end{proof}

For the rest of this section we assume that $\frg$ and $\frr$ have equal rank, and fix a $(\frg,\frr)$-module $V$. 
We decompose the spin module $S$ as in (\ref{S plusminus}): $S=S^+\oplus S^-$. 
Since $D$ has odd $C(\frs)$-part,
it interchanges $V\otimes S^+$ and $V\otimes S^-$ for any $(\frg,\frr)$-module $V$. 
Thus all odd powers of $D$ interchange $V\otimes S^+$ and $V\otimes S^-$,
while the even powers preserve them. This implies that each $H^k(V)$ splits into even and odd parts, and consequently
so does $H(V)$:
\[
H(V)=H(V)^+\oplus H(V)^-.
\]
We define the higher Dirac index of $V$ to be the virtual $\frr$-module
\eq
\label{def higher index}
I(V)=H(V)^+ - H(V)^-.
\eeq
Then we have the following result:
\begin{theorem}
\label{thm higher index}
Assume $\frg$ and $\frr$ have equal rank. Let $V$ be a $(\frg,\frr)$-module. Then
\[
I(V)=V\otimes S^+ - V\otimes S^-
\]
as virtual $\frr$-modules.
\end{theorem}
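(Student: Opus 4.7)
The plan is to decompose $V\otimes S$ into generalized eigenspaces for $D^2$ and observe that only the generalized $0$-eigenspace contributes non-trivially to both sides of the claimed equality, where the two contributions then match via the Jordan block analysis already carried out in Theorem \ref{hdc block}.

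First I would set up the generalized eigenspace decomposition. Since $V|_\frr$ is a direct sum of finite-dimensional irreducibles with finite multiplicities, so is $V\otimes S$. The operator $D^2$ is $\frr$-equivariant and hence preserves each $\frr$-isotypic component, which is finite-dimensional, so it acts locally finitely. Consequently
\[
V\otimes S = \bigoplus_\lambda W_\lambda
\]
where $W_\lambda$ is the generalized $\lambda$-eigenspace of $D^2$, and each $W_\lambda$ is $\frr$-invariant. Since $D$ commutes with $D^2$ it preserves each $W_\lambda$, and since $D$ has odd $C(\frs)$-part it swaps the subspaces $W_\lambda^{\pm} := W_\lambda\cap(V\otimes S^{\pm})$.

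For $\lambda\neq 0$ the operator $D^2$ is invertible on $W_\lambda$, hence so is $D$, giving an $\frr$-module isomorphism $D\colon W_\lambda^+\to W_\lambda^-$. Thus each $\frr$-type appears with the same multiplicity in $W_\lambda^+$ and $W_\lambda^-$, and these eigenspaces cancel in the virtual $\frr$-module $V\otimes S^+ - V\otimes S^-$. The entire contribution is therefore concentrated in $W_0=(V\otimes S)_{[0]}$, where $D$ is nilpotent, so I would fix a Jordan block decomposition as in the paragraph preceding Theorem \ref{hdc block}. In a block of $\frr$-type $E$ and length $m$, consecutive subspaces $V_i$ lie in opposite parities of $V\otimes S$ because $D$ swaps $V\otimes S^\pm$. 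For even $m$ the $+$ and $-$ counts of $E$ are equal, so the block contributes $0$ to $V\otimes S^+ - V\otimes S^-$, matching its zero contribution to $H(V)$ (Theorem \ref{hdc block}). For odd $m=2k+1$, one parity contains $k+1$ copies of $E$ and the other contains $k$, giving a net contribution of $\pm E$ to $V\otimes S^+ - V\otimes S^-$; the bottom piece $V_1$ lies in the same parity as the majority $k+1$ copies and, by Theorem \ref{hdc block}, contributes the same $\pm E$ to $H^k(V)^+ - H^k(V)^-$. Summing over all Jordan blocks finishes the proof.

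The main point requiring care is the first step: justifying the generalized $D^2$-eigenspace decomposition on an infinite-dimensional module, since a priori $D^2$ need not act locally finitely on $V\otimes S$. This is resolved by combining the $\frr$-equivariance of $D^2$ with the finiteness of each $\frr$-isotypic component. Once this framework is in place, the rest of the argument is combinatorial bookkeeping on Jordan blocks, and the odd/even parity pattern of the $V_i$'s inside $V\otimes S^{\pm}$ makes the two sides agree term by term.
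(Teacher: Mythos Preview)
Your proposal is correct and follows essentially the same approach as the paper: reduce to the generalized $0$-eigenspace of $D^2$, then match Jordan blocks parity-wise using Theorem~\ref{hdc block}. The only minor difference is in the first step: the paper invokes the generalized infinitesimal character assumption together with formula~(\ref{D squared}) to obtain the generalized $D^2$-eigenspace decomposition, whereas you obtain it directly from the $\frr$-equivariance of $D^2$ and the finite-dimensionality of each $\frr$-isotypic component; both arguments are valid and lead into the identical Jordan-block bookkeeping.
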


\begin{proof}
By our assumptions on $V$, $V$ splits into a direct sum of components with generalized infinitesimal characters. 
It then follows from (\ref{D squared}) that $V\otimes S$ splits into a direct sum of generalized eigenspaces $(V\otimes S)_\lambda$ for $D^2$.
This decomposition is compatible with the decomposition $V\otimes S=V\otimes S^+ \oplus V\otimes S^-$.

It is clear that $D$ preserves each $(V\otimes S)_\lambda$. Moreover, $D$ is invertible on $(V\otimes S)_\lambda$ if $\lambda\neq 0$. 
Indeed, $D$ is injective on $(V\otimes S)_\lambda$ since $Dv=0$ implies $D^2v=0$ and for $v\in (V\otimes S)_\lambda$ this is impossible unless
$v=0$. Since $(V\otimes S)_\lambda$ is finite-dimensional, $D$ must also be surjective on it.

Now since
\[
\begin{CD}
D:(V\otimes S)_\lambda^+ @>\cong>> (V\otimes S)_\lambda^-,\qquad \lambda\neq 0,
\end{CD}
\]
we see that in $V\otimes S^+-V\otimes S^-$ all $(V\otimes S)_\lambda$ for $\lambda\neq 0$ cancel out.
Since $H(V)$ is constructed only from $(V\otimes S)_0$, we see that it is enough to prove
\[
I(V)=(V\otimes S)_0^+ - (V\otimes S)_0^-.
\]
We now decompose $(V\otimes S)_0$ into Jordan blocks. Each such block is of the form $V_1\oplus\dots\oplus V_k$ as in 
(\ref{block}) and (\ref{block2}), and we can moreover assume that $V_1$ is either in $(V\otimes S)_0^+$ (even) or in $(V\otimes S)_0^-$ (odd).
Since $D$ interchanges $(V\otimes S)_0^+$ and $(V\otimes S)_0^-$, it follows that $V_1,V_2,V_3,\dots$ are of alternating parity.
We now conclude that the total contribution of this Jordan block to $V\otimes S^+-V\otimes S^-$ is zero if $k$ is even, and $\pm V_1$ if $k$ is odd.
(The sign $\pm$ depends on the parity of $V_1$.) By  Theorem \ref{hdc block}, the contribution to $I(V)$ is exactly the same, and the result follows.
\end{proof}

\begin{corollary}
\label{index Groth}
The higher Dirac index is additive with respect to short exact sequences of $(\frg,\frr)$-modules, and hence
makes sense on the level of virtual $(\frg,\frr)$-modules.
\end{corollary}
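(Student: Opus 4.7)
The plan is to deduce the additivity of $I$ directly from Theorem \ref{thm higher index}, which expresses $I(V)$ entirely in terms of the $\frr$-module structure of $V\otimes S^{+}$ and $V\otimes S^{-}$. Once that identification is in hand, the corollary will reduce to the exactness of the functors $-\otimes S^{\pm}$.

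Concretely, given a short exact sequence
\[
0\to U\to V\to W\to 0
\]
of $(\frg,\frr)$-modules, I would first tensor over $\bbC$ with the finite-dimensional vector spaces $S^{+}$ and $S^{-}$. Exactness of tensoring over a field yields short exact sequences
\[
0\to U\otimes S^{\pm}\to V\otimes S^{\pm}\to W\otimes S^{\pm}\to 0
\]
of $\frr$-modules. Since $S^{\pm}$ are finite-dimensional and the standing hypotheses on $(\frg,\frr)$-modules (direct sum of finite-dimensional irreducible $\frr$-modules with finite multiplicities) are preserved under tensoring with a finite-dimensional $\frr$-module, each term lies in the Grothendieck group of semisimple admissible $\frr$-modules, and in that group
\[
[V\otimes S^{\pm}]=[U\otimes S^{\pm}]+[W\otimes S^{\pm}].
\]

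Subtracting the relation for $S^{-}$ from the one for $S^{+}$ and applying Theorem \ref{thm higher index} to each of $U$, $V$ and $W$ then gives
\[
I(V)=[V\otimes S^{+}]-[V\otimes S^{-}]=I(U)+I(W),
\]
which is precisely the claimed additivity. The second assertion is then formal: the universal property of the Grothendieck group of the abelian category of $(\frg,\frr)$-modules produces a well-defined homomorphism $I$ from that group to the Grothendieck group of semisimple admissible $\frr$-modules.

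I do not expect any genuine obstacle here; the only bookkeeping is to verify that $U\otimes S^{\pm}$, $V\otimes S^{\pm}$ and $W\otimes S^{\pm}$ all lie in the same target category, which is immediate from the finite-dimensionality of $S^{\pm}$ together with the reductivity of $\frr$. The entire content of the corollary has in effect been packaged into Theorem \ref{thm higher index}: once $I(V)$ is rewritten as an $\frr$-theoretic Euler characteristic built purely out of $V\otimes S$, additivity in $V$ is automatic, in sharp contrast to the failure of additivity for $I_D$ observed for the module $P$ in Section \ref{sl2example}.
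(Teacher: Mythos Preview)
Your argument is correct and follows essentially the same route as the paper: both reduce to Theorem \ref{thm higher index}, then observe that $V\mapsto V\otimes S^{+}-V\otimes S^{-}$ is manifestly additive on short exact sequences since these split over $\frr$. The only cosmetic difference is that the paper also invokes the proof of Theorem \ref{thm higher index} to note that all but finitely many $\frr$-types cancel in $V\otimes S^{+}-V\otimes S^{-}$, whereas you handle well-definedness by checking directly that $V\otimes S^{\pm}$ remain admissible $\frr$-modules.
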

\begin{proof} This follows from Theorem \ref{thm higher index} and its proof. Namely, the statement of the corollary is clear for the functor
$V\mapsto V\otimes S^+-V\otimes S^-$, since a short exact sequence of $(\frg,\frr)$-modules splits in the category of $\frr$-modules, and since
we have seen that in $V\mapsto V\otimes S^+-V\otimes S^-$ all terms but finitely many of them cancel out.
\end{proof}

\section{Exact triangles}
\label{sec:triangles}

\begin{theorem}
\label{thm:triangles}
Let $0\to U\to V\to W\to 0$ be a short exact sequence of $(\frg,\frr)$-modules. Then there is an exact triangle
\bigskip

\xymatrix{
& & & H(U) \ar[rr] & & H(V)\ar[ld] \\
& & & &  H(W)\ar[lu] & &  \\ 
}
\bigskip
\end{theorem}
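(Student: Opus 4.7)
The plan is to reduce the theorem to a finite-dimensional linear-algebra problem and then construct the maps of the triangle by a Jordan-block analysis.

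First, I would restrict to the generalized $0$-eigenspace of $D$ on $V\otimes S$, and analogously for $U\otimes S$ and $W\otimes S$. As in the proofs of Theorem \ref{hdc block} and Theorem \ref{thm higher index}, only $(V\otimes S)_{[0]}$ contributes to $H(V)$, since $\ker D$ itself lies inside $(V\otimes S)_{[0]}$ and is $D$-stable. Because $D$ commutes with the morphisms $U\hookrightarrow V\twoheadrightarrow W$ from the given short exact sequence, passing to generalized $0$-eigenspaces is exact; this produces a short exact sequence
\eq
\label{plan ses}
0\to (U\otimes S)_{[0]} \to (V\otimes S)_{[0]} \to (W\otimes S)_{[0]} \to 0
\eeq
of finite-dimensional $\bbZ/2$-graded $\frr$-modules, on which $D$ acts $\frr$-equivariantly, nilpotently, and with odd parity. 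The theorem is thereby reduced to constructing an exact triangle of higher Dirac cohomologies attached to (\ref{plan ses}).

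Next, using complete reducibility of $\frr$-modules, I would choose an $\frr$-equivariant splitting of (\ref{plan ses}), writing
\[
(V\otimes S)_{[0]}\cong (U\otimes S)_{[0]}\oplus (W\otimes S)_{[0]},\qquad D=\begin{pmatrix}D_U & \beta\\ 0 & D_W\end{pmatrix},
\]
where $\beta:(W\otimes S)_{[0]}\to (U\otimes S)_{[0]}$ is an $\frr$-equivariant, parity-reversing linear map encoding the non-split data. Using this splitting I would construct the three arrows: $H(U)\to H(V)$ is the map induced functorially by inclusion (well-defined since the inclusion intertwines $D$); the parity-shifting connecting homomorphism $H(W)\to H(U)$ is built from $\beta$ by a snake-lemma-type procedure, assigning to a class $[y]\in H^k(W)$ with $y=D_W^{2k}y'$ the class of $\beta(y')$ suitably corrected by elements of $(U\otimes S)_{[0]}$ to land in $\ker D_U\cap\im D_U^{\geq 2k+1}$; and the middle map $H(V)\to H(W)$ is then chosen so as to make the resulting six-term sequence exact. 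As the $\frs\frl(2,\bbC)$ example of Section \ref{sl2example} shows, this middle map cannot in general coincide with the naive one induced by the quotient $V\twoheadrightarrow W$, and this is precisely why the construction is not natural.

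The hard part is verifying exactness at each of the six corners while keeping the construction $\frr$-equivariant. I would organize this as a direct Jordan-block analysis of (\ref{plan ses}): by Theorem \ref{hdc block}, $H$ sees exactly the bottoms of odd-length blocks, and every Jordan block of $(V\otimes S)_{[0]}$ is either contained in $(U\otimes S)_{[0]}$, projects isomorphically onto a block of $(W\otimes S)_{[0]}$, or is a \emph{mixed} block whose bottom part lies in $(U\otimes S)_{[0]}$ while its top part corresponds, via the splitting, to a block in $(W\otimes S)_{[0]}$. Blocks of the first two types contribute trivially to the connecting map, whereas mixed blocks are exactly what obstructs naturality of the middle map and what the connecting map is built to detect. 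Once the top-to-bottom matching of mixed blocks is seen to be $\frr$-equivariant, exactness reduces to a finite case check on block types, kept honest by the additivity of the higher Dirac index (Corollary \ref{index Groth}), which provides a consistency check on dimensions and parities throughout.
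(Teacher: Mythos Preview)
Your reduction to the finite-dimensional short exact sequence of generalized $0$-eigenspaces is correct and matches the paper, and the Jordan-block trichotomy you describe in your last paragraph (blocks entirely in $U$, blocks projecting isomorphically to $W$, and mixed blocks) is exactly the mechanism the paper uses. The paper carries this out directly: it first builds a \emph{compatible} block decomposition of all three spaces simultaneously (by lifting the top $\tilde K$-types of the $W$-blocks to $V$, reading off the induced $U$-blocks as kernels, and then decomposing the leftover part of $(U\otimes S)_{[0]}$), so that the short exact sequence splits as a direct sum of sequences $0\to U_j\to V_j\to W_j\to 0$ with each term a single block or zero. Since the block lengths satisfy $|U_j|+|W_j|=|V_j|$, either all three are even (all cohomologies vanish) or exactly one is even; in the latter case the two odd blocks contribute the same single $\tilde K$-type and the exact triangle is written down by hand with one identity arrow and two zero arrows.

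The genuine gap in your plan is the claim that $H(U)\to H(V)$ can be taken to be the map functorially induced by the inclusion. It cannot. Take a mixed block with $|U_j|=1$, $|V_j|=3$, $|W_j|=2$: writing $V_j=V_1\oplus V_2\oplus V_3$ with $D V_1=0$ and $D:V_i\cong V_{i-1}$, the submodule $U_j$ is $V_1$. Then $H(U_j)=H^0(U_j)=V_1$ and $H(V_j)=H^1(V_j)=V_1$, while $H(W_j)=0$, so the triangle forces $H(U_j)\to H(V_j)$ to be an isomorphism. But the inclusion-induced map sends $[V_1]\in H^0(U_j)$ to $H^0(V_j)$, where it vanishes because $V_1=D V_2\in\im D_V$; it does not land in $H^1(V_j)$ at all. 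So already the first arrow fails, and you cannot rescue exactness by only adjusting the middle map $H(V)\to H(W)$. The paper avoids this by making no attempt to use the natural maps: all three arrows are assembled block by block from the explicit identity/zero pattern, which is why the construction is openly declared non-canonical.
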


One would like the maps $H(U)\to H(V)\to H(W)$ to be induced by the maps $U\to V\to W$, and to have a naturally
defined connecting homomorphism $H(W)\to H(U)$. 
We were however unable to obtain such a natural construction (indeed, we believe that it is not possible).
We instead use the Jordan block decomposition to obtain the above exact triangle in a noncanonical fashion.

\begin{proof}
We first make compatible Jordan block decompositions of $(U\otimes S)_{[0]}$, $(V\otimes S)_{[0]}$ and $(W\otimes S)_{[0]}$. We start by picking a decomposition
for $(W\otimes S)_{[0]}$ into Jordan blocks $W_1,\dots,W_n$. Let $\bar W_1,\dots,\bar W_n$ be the top $\tilde K$-types of the blocks $W_1,\dots,W_n$. 
We choose preimages $\bar V_1,\dots,\bar V_n$ of $\bar W_1,\dots,\bar W_n$ in $(V\otimes S)_{[0]}$. These 
are necessarily also top $\tilde K$-types of some blocks $V_1,\dots,V_n$. Clearly, for each $i$, the kernel of $V_i\to W_i$ is etiher zero, or defines a Jordan block $U_i$ in $(U\otimes S)_{[0]}$.

Now we choose a direct complement of the sum of the blocks $V_1,\dots,V_n$, which is contained in the kernel of 
$V\to W$, and choose any
Jordan block decomposition for this complement. We have now obtained a decomposition for $(V\otimes S)_{[0]}$ and also for $(U\otimes S)_{[0]}$.

Our short exact sequence (tensored with $S$) now breaks up into a direct sum of short exact sequences
\[
0\to U_j\to V_j\to W_j\to 0,
\]
where each of $U_j$, $V_j$ and $W_j$ is either a single Jordan block or zero. It is therefore enough to obtain an exact triangle for each $j$.

Since the lengths of $U_j$ and $W_j$ add up to the length of $V_j$, there are two possibilities:

\begin{enumerate} 
\item $U_j$, $V_j$ and $W_j$ are all even. In that case $H(U_j)=H(V_j)=H(W_j)=0$, and hence the exact triangle trivially exists.
\item One of $U_j$, $V_j$ and $W_j$ is even, while the other two are odd. The cohomology of the two odd blocks is the same single
$\tilde K$-type, while the cohomology of the even block is 0. The exact triangle is one of the following.
\end{enumerate} 

\xymatrix{
& & & H(U_j) \ar[rr]^1 & & H(V_j)\ar[ld]^0 \\
& & & &  H(W_j)\ar[lu]^0 & &  \\ 
}
\bigskip

\xymatrix{
& & & H(U_j) \ar[rr]^0 & & H(V_j)\ar[ld]^1 \\
& & & &  H(W_j)\ar[lu]^0 & &  \\ 
}
\bigskip

\xymatrix{
& & & H(U_j) \ar[rr]^0 & & H(V_j)\ar[ld]^0 \\
& & & &  H(W_j)\ar[lu]^1 & &  \\ 
}
\end{proof}

Since $U$, $V$ and $W$ are all of finite length, the spaces $H(U)$, $H(V)$ and $H(W)$ are finite-dimensional. Thus we have:

\begin{corollary} The existence of the above exact triangle is equivalent to the following two conditions:
\begin{enumerate}
\item $\dim H(U) + \dim H(V)+\dim H(W)$ is even;
\item $\dim H(U)$, $\dim H(V)$ and $\dim H(W)$ are sides of a (possibly degenerate) triangle, i.e., 
the sum of any two of them is greater than or equal to the third.
\end{enumerate}
\end{corollary}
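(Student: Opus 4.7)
The plan is to recognize that the statement is a purely linear-algebraic fact about three finite-dimensional vector spaces $A,B,C$ admitting a cyclic length-three exact sequence $A\xrightarrow{f} B\xrightarrow{g} C\xrightarrow{h} A$ (exact at every vertex). Theorem \ref{hdc block} guarantees that $H(U),H(V),H(W)$ are all finite-dimensional (since $U,V,W$ have finite length), so one only needs to prove the abstract equivalence and apply it.

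For the forward implication, I would assume such a triangle exists and let $a,b,c$ denote the dimensions and $r_f,r_g,r_h$ the ranks of the three maps. Exactness at $A$, $B$, $C$ gives $a=r_f+r_h$, $b=r_g+r_f$, $c=r_h+r_g$, where I have used $\ker f=\im h$, $\ker g=\im f$, $\ker h=\im g$. Summing these three identities yields $a+b+c=2(r_f+r_g+r_h)$, which is condition (1). Solving the linear system gives $r_f=\tfrac{1}{2}(a+b-c)$, $r_g=\tfrac{1}{2}(b+c-a)$, $r_h=\tfrac{1}{2}(a+c-b)$, and non-negativity of these three ranks is precisely the triangle inequality (2).

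For the reverse implication, assume both (1) and (2) and define $r_f,r_g,r_h$ by the formulas above; the parity hypothesis (1) makes them integers and the inequality hypothesis (2) makes them non-negative. I would then build the triangle explicitly by choosing direct-sum decompositions $A=A_0\oplus A_1$, $B=B_0\oplus B_1$, $C=C_0\oplus C_1$ with $\dim A_0=r_h$, $\dim A_1=r_f$, $\dim B_0=r_f$, $\dim B_1=r_g$, $\dim C_0=r_g$, $\dim C_1=r_h$ (the total dimensions add up correctly thanks to $r_f+r_h=a$, etc.), and defining $f,g,h$ to vanish on the first summand and be an isomorphism from the second summand onto the first summand of the next space. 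Exactness at each vertex is then immediate from the construction.

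No significant obstacle is anticipated; the only thing to be careful about is bookkeeping the six dimensions so that the three maps compose into an honest cyclic exact sequence. The key conceptual point is that for a three-term cyclic exact sequence the three ranks are determined by the three dimensions via the triangle-inequality formulas, which is what makes the equivalence so clean.
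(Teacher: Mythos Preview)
Your proposal is correct and follows essentially the same approach as the paper: both reduce the question to solving the linear system relating the three dimensions to three nonnegative integers (your ranks $r_f,r_g,r_h$ are exactly the paper's $a_2,a_3,a_1$ coming from the decompositions $H_i=A_i\oplus B_i$), obtaining the same formulas $(h_1+h_2-h_3)/2$, etc., whose integrality and nonnegativity are precisely conditions (1) and (2). The only cosmetic difference is that you phrase things in terms of ranks of the maps while the paper phrases them in terms of the kernel/complement splittings, but the arithmetic and the explicit construction for the converse are identical.
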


\begin{proof} 
This follows from the fact that existence of an exact triangle with vertices 
$H_1$, $H_2$ and $H_3$ is equivalent to the condition that $H_i$ can be written as
$H_i=A_i\oplus B_i$, so that there are isomorphisms $B_1\to A_2$, $B_2\to A_3$ and $B_3\to A_1$.

Writing the dimensions using lower case letters, it follows that $b_1=a_2$, $b_2=a_3$ and $b_3=a_1$.
So the above condition is equivalent to 
\[
h_1=a_1+a_2,\qquad h_2=a_2+a_3,\qquad h_3=a_3+a_1.
\]

The solution to this system of equations is
\[
a_1=\frac{h_1-h_2+h_3}{2},\quad a_2=\frac{h_1+h_2-h_3}{2},\quad a_3=\frac{-h_1+h_2+h_3}{2}.
\]

Since the $a_i$ have to be nonnegative integers, we are led to the above conditions on $h_i$. 
\end{proof}

\section{Other related constructions}
\label{sec:other}

Definition \ref{defhdc} of $H^k(V)$ can be thought of as extracting the bottom $\Kt$-type
from each Jordan block of size $2k+1$. One can similarly extract the top $\Kt$-type: define 
\eq 
\label{eq:top}
H^k_{\top}(V)=\ker D^{2k+1}\big/ (\im D\cap\ker D^{2k+1}+\ker D^{2k}).
\eeq

\begin{proposition}
\label{top bottom}
$H^k_{\top}(V)$ (defined by (\ref{eq:top})) is naturally isomorphic to $H^k(V)$ (defined by (\ref{hdc1})).
\end{proposition}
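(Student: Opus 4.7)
The natural candidate is multiplication by $D^{2k}$, motivated by the Jordan block picture of Theorem~\ref{hdc block}: inside a size-$(2k+1)$ block, $D^{2k}$ sends the top $\tilde K$-type isomorphically onto the bottom one, while on blocks of other sizes both $H^k$ and $H^k_{\top}$ vanish in that block. Thus I would define
\[
\varphi\colon H^k_{\top}(V)\to H^k(V),\qquad \varphi([v])=[D^{2k}v]\quad\text{for } v\in\ker D^{2k+1},
\]
and prove that $\varphi$ is a well-defined isomorphism by direct kernel/image manipulations.

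To see $\varphi$ is well defined I would check three inclusions. First, $D^{2k}$ carries $\ker D^{2k+1}$ into $\im D^{2k}\cap\ker D$, since $D(D^{2k}v)=D^{2k+1}v=0$. Second, $D^{2k}$ kills $\ker D^{2k}$, trivially. Third, if $v=Dw$ with $D^{2k+1}v=0$, then $D^{2k}v=D^{2k+1}w\in\im D^{2k+1}$, and $D(D^{2k+1}w)=D^{2k+1}v=0$ puts it into $\ker D$ as well; hence the subspace $\im D\cap\ker D^{2k+1}$ of the denominator of $H^k_{\top}$ is sent into the denominator $\im D^{2k+1}\cap\ker D$ of $H^k$.

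Surjectivity is essentially automatic: any $x\in\im D^{2k}\cap\ker D$ is of the form $x=D^{2k}v$, and $Dx=0$ forces $D^{2k+1}v=0$, so $v$ already represents a class in $H^k_{\top}(V)$. For injectivity, suppose $v\in\ker D^{2k+1}$ satisfies $D^{2k}v=D^{2k+1}w$ for some $w\in V\otimes S$. Then $v-Dw\in\ker D^{2k}$, while $Dw\in\im D$ and $D^{2k+1}(Dw)=D\cdot D^{2k}v=D^{2k+1}v=0$ shows $Dw\in\im D\cap\ker D^{2k+1}$; therefore $v\in\ker D^{2k}+\im D\cap\ker D^{2k+1}$, i.e.\ $[v]=0$ in $H^k_{\top}(V)$.

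The argument is purely formal and works on all of $V\otimes S$ without first passing to the generalized $0$-eigenspace of $D^2$ or choosing a Jordan decomposition, so there is no serious obstacle—the Jordan block analysis only provides the geometric intuition and an independent consistency check against Theorem~\ref{hdc block}. The only mild care needed is bookkeeping the four subspaces $\ker D^{2k}$, $\ker D^{2k+1}$, $\im D^{2k}$, $\im D^{2k+1}$ and tracking how $D^{2k}$ shifts the relevant intersections.
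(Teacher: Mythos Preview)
Your proof is correct and is essentially identical to the paper's: both define the isomorphism via $D^{2k}$ and verify it by the same direct kernel/image manipulations, with the paper phrasing the well-definedness and injectivity together as computing $\ker\varphi=\im D\cap\ker D^{2k+1}+\ker D^{2k}$.
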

\begin{proof} The isomorphism from $H^k_{\top}(V)$ to $H^k(V)$ is induced by $D^{2k}$. 

Indeed, it is clear that $D^{2k}$ maps
$\ker D^{2k+1}$ onto $\im D^{2k}\cap \ker D$. Denote by $\varphi$ the composition of this map with the projection 
$\im D^{2k}\cap \ker D\to H^k(V)$. It suffices to show that $\ker\varphi= \im D\cap\ker D^{2k+1}+\ker D^{2k}$. It is
clear that $\im D\cap\ker D^{2k+1}$ and $\ker D^{2k}$ are contained in $\ker\varphi$, hence so is their sum. Conversely,
assume $x\in \ker D^{2k+1}$ is such that $\varphi(x)=0$. Then $D^{2k} x \in  \im D^{2k+1}\cap \ker D$. So there is $y$ such
that $D^{2k}x=D^{2k+1}y$, i.e., $D^{2k}(x-Dy)=0$. Denoting $x-Dy$ by $z$, we see that $x=Dy+z$, with $Dy\in \im D\cap\ker D^{2k+1}$
and $z\in \ker D^{2k}$. This finishes the proof.
\end{proof}

It is also possible to apply lower powers of $D$ to $H^k_{\top}$, to extract other $\Kt$-types from the Jordan blocks of size $2k+1$.
However, in order to keep Theorem \ref{thm higher index} valid, one should only extract cells of correct parity, i.e.,
apply only even powers of $D$. Explicitly, the formula for extracting the $(2k+1-2i)$-th $\Kt$-type is
\[
H^k_i(V)=\im D^{2i}\cap\ker D^{2k+1-2i}\big/ (\im D^{2i+1}\cap\ker D^{2k+1-2i}+\im D^{2i}\cap\ker D^{2k-2i}).
\]
We leave it to the reader to prove an analogue of Proposition \ref{top bottom} for $H^k_i$.

There are other kinds of ``higher cohomology" one can attach to $D$, discussed in \cite{DV}. In the
following we briefly describe them and show how they are related to our definition.

Let $V$ be a $(\gog,\gor)$-module. Let us fix a number $N\in\mN$ such that $D^N=0$ on 
the generalized 0-eigenspace $(V\otimes S)_{[0]}$ for $D$ acting on $V\otimes S$. The number $N$ can be chosen
as the size of the largest Jordan block for $D$ on $(V\otimes S)_{[0]}$, or $N$ can be any larger number.

Now $D$ is an $N$-differential on $(V\otimes S)_{[0]}$ in the sense of \cite{DV}, and we can consider the following
cohomology spaces:
\eq
\label{hdcDV}
H^i_D(V)=H^i_D((V\otimes S)_{[0]})=\Ker(D^i)/\Im(D^{N-i}),\qquad i=1,\dots ,N-1.
\eeq

The definition depends on the choice of $N$, and in general it is quite different from our $H^k(V)$. We will see below that
there is a relationship between the two notions in general; for now, we just remark that if $V$ has infinitesimal character,
then $N$ can be taken to be 2, and then
\[
H^1_D(V)=H_D(V)=H(V).
\]
The main advantage of the functors (\ref{hdcDV}) is the existence of certain naturally defined  
six-term exact sequences attached to a short exact sequence 

\xymatrix{
& & & 0 \ar[r] & U \ar[r]^{i} & V \ar[r]^{p} & W \ar[r] & 0
}
\smallskip
\noindent of $(\frg,\frr)$-modules as in \cite{DV}, Lemma 2; see also \cite{KW}.

Namely, choosing $N$ as above, one gets that for any $i$ between 1 and $N-1$ there is a six term exact sequence
\smallskip

\xymatrix{
 & & H_D^i(U) \ar[r]^{i_\star} & H_D^i(V) \ar[r]^{p_\star} & H_D^i(W) \ar[d]^\partial \\
 & & H_D^{N-i}(W) \ar[u]^{\partial} & \ar[l]_{p_\star} H_D^{N-i}(V)  & \ar[l]_{i_\star} H_D^{N-i}(U) 
}
\smallskip

Here $i_*$ and $p_*$ are induced by $i$ and $p$ in the obvious way, while the connecting homomorphism 
\begin{eqnarray} 
\partial : H^i_D(W)\to H^{N-i}_D(U)
\end{eqnarray}
is defined in the following way. Let $w\in W$ be a representative of an element of $H^i_D(W)$, i.e. $D^iw=0$. By 
surjectivity of $p$ there is $v\in V$ such that $p(v)=w$. Define $z=D^iv$, then 
$D^{N-i}z=0$. We have $p(z)=p(D^{i}v)=D^{i}p(v)=D^{i}w=0$, hence there is
$u$ such that $z=i(u)$. Because $D^{N-i}u=0$, we set 
$\partial(w)=[u]\in \Ker(D^{N-i})/\Im(D^{i})$.

It is now easy to check that the class $[u]$ is independent of the choice of $v$, that the map $\partial$ is well 
defined on classes, and that the above six-term sequence is exact. 

To describe the relationship of the functors (\ref{hdcDV}) to our $H(V)$, we first consider the case when 
$(V\otimes S)_{[0]}$ is a single Jordan block of length $k$, as in (\ref{block}) and (\ref{block2}). 
It is clear that $D^N=0$ on $(V\otimes S)_{[0]}$ if and only if $N\geq k$; we fix such an $N$. 

Since there is a compact Lie group with complexified Lie algebra $\frr$, we can
construct an $\frr$-invariant inner product on $V$, and we can moreover assume that all the $V_i$ contained in the 
block $(V\otimes S)_{[0]}$ are orthogonal to each other with respect to this inner product. Using this inner product, 
we identify a quotient of the form $\ker D^i/\im D^{N-i}$ with the orthogonal complement of $\im D^{N-i}$ in $\ker D^i$.

\begin{proposition}
\label{prop block}
With the above notation, $V_j$ contributes to $H_D^i(V)$ if and only if
\[
j\leq i < j+N-k.
\]
\end{proposition}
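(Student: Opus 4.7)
The plan is a direct index computation inside the single Jordan block. From (\ref{block})--(\ref{block2}), the iterate $D^m$ restricts to an isomorphism $V_j \to V_{j-m}$ when $j > m$ and to zero when $j \leq m$. Reading this off immediately gives $\ker D^i = V_1 \oplus \cdots \oplus V_{\min(i,k)}$ and $\im D^{N-i} = V_1 \oplus \cdots \oplus V_{\max(0,\, k - N + i)}$, where the second sum is understood to be zero if $k \leq N-i$.

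Next, I would invoke the fixed $\frr$-invariant inner product on $(V\otimes S)_{[0]}$, chosen so that the $V_j$ are pairwise orthogonal. With this identification, $H_D^i(V) = \ker D^i / \im D^{N-i}$ becomes the orthogonal complement of $\im D^{N-i}$ inside $\ker D^i$, which is precisely the direct sum of those $V_j$ with $\max(0,\, k-N+i) < j \leq \min(i,k)$. Since $1 \leq j \leq k$ holds automatically for any summand of the block, this cuts down to the two clean inequalities $j \leq i$ and $j > k - N + i$, equivalently $j \leq i < j + N - k$, which is the stated condition.

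I expect no genuine obstacle here: the whole argument is linear algebra on a single Jordan block and reduces to careful index bookkeeping. The one point worth spelling out is the two boundary regimes $i \geq k$ and $N - i \geq k$: in the first, $\ker D^i$ is the whole block, and $j \leq i$ is automatic; in the second, $\im D^{N-i} = 0$, and $j > k - N + i$ is automatic. Both cases are consistent with the proposed envelope $j \leq i < j + N - k$, so a single statement covers all of them.
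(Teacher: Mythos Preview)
Your proof is correct and follows essentially the same approach as the paper. Both arguments reduce to checking, for a fixed $V_j$, the two conditions $V_j\subset\ker D^i$ (giving $j\leq i$) and $V_j\not\subset\im D^{N-i}$ (giving $j>k-N+i$); you simply package this by first writing out $\ker D^i$ and $\im D^{N-i}$ as explicit partial sums of the block, whereas the paper checks the two conditions on $V_j$ directly.
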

\begin{proof} $V_j$ will contribute to $H_D^i(V)$ precisely when $D^i(V_j)=0$ and $V_j$ is not contained in $\im D^{N-i}$. Clearly,
$D^i(V_j)=0$ if and only if $j\leq i$. On the other hand, $V_j$ is contained in $\im D^{N-i}$ if and only if $j+N-i\leq k$. Namely, if this 
inequality holds, then 
\[
V_j=D^{N-i}(V_{j+N-i}),
\]
and that is the only possibility to obtain $V_j$ in $\im D^{N-i}$. So $V_j$ is not contained in $\im D^{N-i}$ if and only if $j+N-i> k$ and the 
statement follows. 
\end{proof}

\begin{example}{\rm Still with the same notation as above, assume that $N=k$. Then by Proposition \ref{prop block}, $H_D^i(V)=0$ for all $i$. Namely,
the inequality $j\leq i<j+N-k=j$ is impossible.

Assume now that $N=k+1$. Then $j\leq i< j+N-k=j+1$ is satisfied precisely for $i=j$. So it follows that in this case
\[
H_D^i(V)=V_i,\qquad 1\leq i\leq N-1=k.
\]
For $N=k+2$, we see that each $V_j$ contributes to $H_D^j(V)$ and $H_D^{j+1}(V)$. It follows that $H_D^1(V)=V_1$, $H_D^{N-1}(V)=V_k$, and
$H_D^i(V)=V_i\oplus V_{i-1}$ for $2\leq i\leq k=N-2$. One can similarly write down what $H_D^i(V)$ is in general. 
}
\end{example}

\begin{proposition}
\label{prop stable}
Let $V$ be a $(\frg,\frr)$-module, and let $2N$ be a positive even integer such that $D^{2N}=0$ on $(V\otimes S)_{[0]}$.
Then there is an equality of virtual $\frr$-modules
\[
H(V) = \sum_{i=1}^{2N-1} (-1)^{i-1} H_D^i(V).
\]
\end{proposition}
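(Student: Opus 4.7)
The plan is to prove the identity blockwise, using a Jordan decomposition of $(V\otimes S)_{[0]}$. As in the proof of Theorem \ref{thm higher index}, $D$ is invertible on each generalized eigenspace $(V\otimes S)_\lambda$ with $\lambda\neq 0$, so such eigenspaces contribute nothing to either $H(V)$ or to any $H_D^i(V)$. Both sides therefore depend only on $(V\otimes S)_{[0]}$. I fix a Jordan block decomposition $(V\otimes S)_{[0]}=\bigoplus_\alpha J_\alpha$ into blocks of length $k_\alpha\leq 2N$ in the sense of (\ref{block})--(\ref{block2}); since $H(V)$ (by Theorem \ref{hdc block}) and each $H_D^i$ from (\ref{hdcDV}) respect direct sums, it suffices to verify the formula on a single block $J=V_1\oplus\cdots\oplus V_k$.

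Next I apply Proposition \ref{prop block} with $2N$ in place of its parameter: $V_j$ contributes to $H_D^i(J)$ precisely when $j\leq i\leq j+2N-k-1$. Interchanging the order of summation gives
\[
\sum_{i=1}^{2N-1}(-1)^{i-1}H_D^i(J)=\sum_{j=1}^{k}V_j\sum_{i=j}^{j+2N-k-1}(-1)^{i-1}.
\]
The inner sum has $2N-k$ consecutive alternating signs, beginning with $(-1)^{j-1}$. Because $2N$ is even, $2N-k$ has the same parity as $k$: the inner sum vanishes when $k$ is even, and equals $(-1)^{j-1}$ when $k$ is odd.

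Finally, the maps $D\colon V_{j+1}\to V_j$ are $\frr$-module isomorphisms, so all $V_j$ in a single block share the same class $[V_1]$ in the Grothendieck group. Hence the contribution of $J$ to the right-hand side is $0$ for even $k$ and $\sum_{j=1}^{k}(-1)^{j-1}[V_j]=[V_1]$ for odd $k$ (an alternating sum of an odd number of equal classes). By Theorem \ref{hdc block}, $J$ contributes $[V_1]$ to $H(V)$ exactly when $k$ is odd, and nothing when $k$ is even. The block-by-block contributions match, and summing over $\alpha$ yields the claimed identity.

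The main obstacle is not conceptual but a matter of bookkeeping: one must carefully apply Proposition \ref{prop block} with the parameter shifted from $N$ to $2N$, and correctly track the bounds of the telescoping alternating sum. Once the reduction to a single Jordan block is in place, no further ingredient is needed beyond Theorem \ref{hdc block} and the evenness of $2N$.
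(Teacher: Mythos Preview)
Your proof is correct and follows essentially the same approach as the paper: reduce to a single Jordan block, apply Proposition~\ref{prop block} (with $2N$ in place of $N$) to determine which $V_j$ occur in each $H_D^i$, and then cancel consecutive terms in the alternating sum, distinguishing the cases of even and odd block length via Theorem~\ref{hdc block}. The only cosmetic difference is that you make the interchange of summation explicit, whereas the paper phrases the same computation in words.
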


\begin{proof} We can decompose $(V\otimes S)_{[0]}$ into Jordan blocks, and prove the required equality for each block separately.
So let us assume that $(V\otimes S)_{[0]}$ is a single block, of size $k$.

By Proposition \ref{prop block}, each $V_j$ contributes to $H^i_D(V)$ precisely for 
\[
i=j,j+1,\dots, j+2N-k-1. 
\]
Since these are neighbouring degrees, 
the consecutive contributions cancel in the alternating sum. 

If $k$ is even, then $2N-k-1$ is odd. So $V_j$ appears an even number of times, and it cancels completely in the alternating sum. This is true for every $j$, so $\sum_{i=1}^{2N-1} (-1)^{i-1} H_D^i(V)=0$. 

If $k$ is odd, then $V_j$ appears an odd number of times, so it contributes to the alternating sum once, with the coefficient $(-1)^{j-1}$. This implies that
\[
\sum_{i=1}^{2N-1} (-1)^{i-1} H_D^i(V)= \sum_{j=1}^{k} (-1)^{j-1} V_j,
\]
and this is equal to $V_1$ since $k$ is odd, and all $V_j$ are isomorphic to $V_1$. Thus we see that $\sum_{i=1}^{2N-1} (-1)^{i-1} H_D^i(V)$ is the same as $H(V)$ as described by Theorem \ref{hdc block}. 
\end{proof}

\begin{remark}{\rm
Note that in the statement of Proposition \ref{prop stable}, the individual summands $H_D^i(V)$ depend on the choice of $N$. Their alternating sum 
is however equal to $H(V)$, so it is independent of the choice of $N$, as long as $N$ is sufficiently large,
i.e., $D^{2N}=0$ on $(V\otimes S)_{[0]}$. This fact can be considered as a stability property of the $H_D^i(V)$.
}
\end{remark}

\begin{remark}{\rm
Using the fact that in the Grothendieck group quotients can be written as differences, one can rewrite the alternating sum of 
Proposition \ref{prop stable} as 
\[
\sum_{i\geq 0} (\Ker D^{2i+1}/\Ker D^{2i})\big/ D(\ker D^{2i+2}/\Im D^{2i+1}),
\]
where $D:\ker D^{2i+2}/\Im D^{2i+1}\to \Ker D^{2i+1}/\Ker D^{2i}$ is an embedding induced by $D$. Another similar expression giving the same result is
\[
\sum_{i\geq 0} \Ker(D:\, \Im D^{2i}/\Im D^{2i+1}\longrightarrow \Im D^{2i+1}/\Im D^{2i+2}).
\]
}
\end{remark}

\begin{remark}{\rm
There are still more candidates for higher Dirac cohomology or homology functors.
They do not have the properties we wanted in this paper, but they might show to be useful for some other purposes.

Let $V$ be a $(\gog,\gor)$-module as before. Let $N$ be sufficiently large so that $D^N=0$ on $(V\otimes S)_{[0]}$.
Consider the following filtration of $(V\otimes S)_{[0]}$:
\[
0\subset \Ker(D)\subset \Ker(D^2)\subset\dots\subset \Ker(D^N)=(V\otimes S)_{[0]}.
\]
Notice that for any $i$ between $0$ and $N-1$, $D(\Ker(D^{i+1}))\subset \Ker(D^{i})$. 
Thus we can define cohomology of $D$ as
\[
\tilde H^i(V) = \Coker(D: \Ker(D^{i+1})\to \Ker(D^{i})).
\]
Analogously, we can define homology of $D$ as
\[
\tilde H_i(V) = \Ker(D: \Coker(D^{i})\longrightarrow \Coker(D^{i+1})).
\]
For example, one can easily check that
\[
\tilde H^1(V) =\tilde H_1(V)=\Ker(D)/(\Ker(D)\cap \Im(D)),
\]
and that for any $k\leq N$ such that $D^k=0$ on $(V\otimes S)_{[0]}$,
\[
\tilde H^{k}(V) =\Coker(D), \qquad \tilde H_{k}(V) =\Ker(D).
\]
}
\end{remark}

\end{document}